\documentclass[11pt,reqno,a4paper]{amsart}
\usepackage{euscript}

\newtheorem{theorem}{Theorem}
\newtheorem{proposition}{Proposition}
\newtheorem{corollary}{Corollary}

\newtheorem{remark}{Remark}
\newtheorem{example}{Example}
\renewcommand{\epsilon}{\varepsilon}
\renewcommand{\phi}{\varphi}

\DeclareMathOperator{\Ker}{Ker}
\def\Z{\mathbb{Z}}
\def\R{\mathbb{R}}
\def\cA{\EuScript{A}}

\def\Id{\text{\rm Id}}

\begin{document}

\title[Shadowing for infinite dimensional dynamics]{Shadowing for infinite dimensional dynamics and exponential trichotomies 
}

\begin{abstract}
Let $(A_m)_{m\in \Z}$ be a sequence of bounded linear maps acting on an arbitrary Banach space $X$ and admitting an exponential trichotomy and let $f_m:X\to X$ be a Lispchitz map for every $m\in \Z$. We prove that whenever the Lipschitz constants of $f_m$, $m\in \Z$, are uniformly  small, the nonautonomous dynamics given by $x_{m+1}=A_mx_m+f_m(x_m)$, $m\in \Z$, has various types of shadowing. Moreover, if $X$ is finite dimensional and each $A_m$ is invertible we prove that a converse result is also true. Furthermore, we get similar results for one-sided and continuous time dynamics. As applications of our results we study the  Hyers-Ulam stability for certain difference equations and we obtain a  very general version of the Grobman-Hartman's theorem for nonautonomous dynamics. 
\end{abstract}

\author{Lucas Backes}
\address{\noindent Departamento de Matem\'atica, Universidade Federal do Rio Grande do Sul, Av. Bento Gon\c{c}alves 9500, CEP 91509-900, Porto Alegre, RS, Brazil.}
\email{lhbackes@impa.br} 

\author{Davor Dragi\v cevi\'c}
\address{Department of Mathematics, University of Rijeka, Croatia}
\email{ddragicevic@math.uniri.hr}

\keywords{Shadowing, Nonautonomus systems, Exponential trichotomies, Nonlinear perturbations, Hyers-Ulam stability}
\subjclass[2010]{Primary: 37C50, 34D09; Secondary: 34D10.}
\maketitle

\maketitle

\section{Introduction}

The foundations of the theory of chaotic dynamical systems dates back to the work of Poincar\'e \cite{Poi90} and is now a well developed area of research. An important feature of chaotic dynamical systems, already observed by Poincar\'e, is the sensitivity to initial conditions: any small change to the initial condition may lead to a large discrepancy in the output. This fact makes somehow complicated or even impossible the task of predicting the real trajectory of the system based on approximations. On the other hand, many chaotic systems, like uniformly hyperbolic dynamical systems \cite{An70, Bow75}, exhibit an amazing property stating that, even though a small error in the initial condition may led eventually to a large effect, there exists a true orbit with a slightly different initial condition that stays near the approximate trajectory. This property is known as the \emph{shadowing property}.

The objective of this paper is to develop a shadowing theory for \emph{nonautonomous} systems acting on an arbitrary Banach space $X$. More precisely, starting with a linear dynamics
\begin{equation}\label{eq: intr}
x_{m+1}=A_m x_m \quad m\in \Z, 
\end{equation}
where the sequence $(A_m)_{m\in \Z}$ admits an \emph{exponential trichotomy}, we prove that a small \emph{nonlinear} perturbation of~\eqref{eq: intr} has various types of the shadowing properties. Moreover, if $X$ has finite dimension and the linear maps $A_m$ are invertible, we prove that $(A_m)_{m\in \Z}$ admits an exponential trichotomy whenever \eqref{eq: intr} satisfies a certain type of shadowing. Furthermore, we partially extend these results to one-sided dynamics and to continuous time dynamics. As applications of our results we provide a characterization of Hyers-Ulam stability for certain difference equations and also exhibit a very general version of the Grobman-Hartman's theorem for nonautonomous dynamics.

\subsection{Relations with previous results}
Our proof is inspired by the analytical proofs of the shadowing lemma by Palmer~\cite{Pal88} and Mayer and Sell~\cite{MS87}. These proofs have also inspired versions of the shadowing lemma for maps acting on Banach spaces (see~\cite{CLP89, Hen94}). While all these previous results deal with \emph{autonomous} dynamical systems, we on the other hand focus in the nonautonomous setting. 

Our work was initiated in \cite{BD19}. In that paper we have also dealt with the shadowing problem in the nonautonomous realm but in much less generality. In fact, our Theorem \ref{cor: uniqueness} generalizes the main result of \cite{BD19} in three directions: \begin{itemize} 
\item  we allow the sequence $(A_m)_{m\in \Z}$ to admit an exponential trichotomy instead of more restrictive assumption made in~\cite{BD19} that $(A_m)_{m\in \Z}$ admits an exponential dichotomy; \item  the nonlinear perturbations of \eqref{eq: intr} allowed here are much more general (for instance, they do not need to be differentiable or bounded as in~\cite{BD19});
\item in the present paper, we don't assume that $\sup_{m\in \Z}\lVert A_m\rVert<\infty$.
\end{itemize}
Moreover, 
in the present paper we treat the cases of one-sided dynamics and continuous time dynamics that were not considered in the previous work allowing us, for instance, to characterize Hyers-Ulam stability for certain difference equations. 

In order to finish this introduction, we would also like to stress that there are other shadowing results for nonautonomous dynamics in Banach spaces in the literature (see for instance the nice monographs \cite{Pal00, Pil99}) but in all those results there are some differentiability and boundedness and/or compactness assumptions that are not present in our results (see for instance condition (5) in Section 1.3.4 of \cite{Pil99}). In particular, our work represents a nontrivial extension of these results. Moreover, our unified approach gives us several types of shadowing at once (see Remark \ref{rem: unified app}).

\section{Preliminaries}
\subsection{Banach sequence spaces}
In this subsection we recall some basic
definitions and properties from the theory of Banach sequence spaces. The material is taken from~\cite{DD, Sasu} where the reader can also find more details. 

Let $\mathcal{S}(\Z)$ be the set of all sequences $\mathbf{s}=(s_n)_{n\in \Z}$ of real numbers. We say that a linear subspace $B\subset \mathcal{S}(\Z)$ is a \emph{normed sequence space} (over $\Z$) if there exists a norm $\lVert \cdot \rVert_B \colon B \to \R_0^+$ such that if $\mathbf{s}'=(s_n')_{n\in \Z}\in B$ and $\lvert s_n\rvert \le \lvert s_n'\rvert$ for $n\in \Z$, then $\mathbf{s}=(s_n)_{n\in \Z}\in B$ and $\lVert \mathbf{s}\rVert_B \le \lVert \mathbf{s}'\rVert_B$. If in addition $(B, \lVert \cdot \rVert_B)$ is complete, we say that $B$ is a \emph{Banach sequence space}.

Let $B$ be a Banach sequence space over $\Z$. We say that $B$ is \emph{admissible} if:
\begin{enumerate}
\item
$\chi_{\{n\}} \in B$ and $\lVert \chi_{\{n\}}\rVert_B >0$ for $n\in \Z$, where $\chi_A$ denotes the characteristic function of the set $A\subset \Z$;
\item
for each $\mathbf{s}=(s_n)_{n\in \Z}\in B$ and $m\in \Z$, the sequence $\mathbf{s}^m=(s_n^m)_{n\in \Z}$ defined by $s_n^m=s_{n+m}$ belongs to $B$ and  $\lVert \mathbf{s}^m \rVert_B = \lVert \mathbf{s}\rVert_B$.
\end{enumerate}
Note that it follows from the definition that for each admissible Banach space $B$ over $\Z$, we have that $\lVert \chi_{\{n\}}\rVert_B=\lVert \chi_{\{0\}}\rVert_B$ for each $n\in \Z$. Throughout this paper we will assume for the sake of simplicity  that $\lVert \chi_{\{0\}}\rVert_B=1$.

We recall some  explicit examples of admissible  Banach sequence spaces over $\Z$ (see~\cite{DD,Sasu}).

\begin{example}\label{ex1}
The set $l^\infty =\{ \mathbf{s}=(s_n)_{n\in \Z} \in \mathcal{S} (\Z): \sup_{n\in \Z} \lvert s_n \rvert < \infty \}$ is an admissible Banach sequence space when equipped with the norm $\lVert \mathbf{s} \rVert =\sup_{n\in \Z} \lvert s_n \rvert$.
\end{example}

\begin{example}\label{ex2}
The set $c_0=\{ \mathbf{s}=(s_n)_{n\in \Z} \in \mathcal{S} (\Z): \lim_{\lvert n\rvert \to \infty} \lvert s_n\rvert=0\}$ is an admissible Banach sequence space when equipped with the norm $\lVert \cdot \rVert$ from the previous example. 
\end{example}

\begin{example}\label{ex3}
For each $p\in [1, \infty )$, the set \[ l^p= \bigg{\{} \mathbf{s}=(s_n)_{n\in \Z} \in \mathcal{S}(\Z): \sum_{n\in \Z} \lvert s_n \rvert^p <\infty \bigg{\}} \] is an admissible Banach sequence space when equipped with the norm \[\lVert \mathbf{s} \rVert= \bigg{(}\sum_{n\in \Z} \lvert s_n \rvert^p \bigg{)}^{1/p}.\]
\end{example}

\begin{example}[Orlicz sequence spaces]
Let $\phi \colon (0, +\infty) \to (0, +\infty]$ be a nondecreasing nonconstant left-continuous function. We set $\psi(t)=\int_0^t \phi(s) \, ds$  for $t\ge 0$. Moreover, for each $\mathbf s=(s_n)_{n\in \Z} \in \mathcal S(\Z)$, let $M_\phi (\mathbf s)=\sum_{n\in \Z} \psi(\lvert s_n \rvert)$. Then
\[
B=\bigl\{ \mathbf s \in \mathcal S(\Z) : M_\phi (c \mathbf s)<+\infty \ \text{for some} \ c>0 \bigr\}
\]
is an admissible Banach sequence space when equipped with the norm
\[
\lVert \mathbf s \rVert=\inf \bigl\{ c>0 : M_\phi ( \mathbf s / c ) \le 1 \bigr\}.
\]
\end{example}

We will also need the following auxiliary result (see~\cite[Lemma 2.3.]{Sasu}).
\begin{proposition}\label{bounds}
Let $B$ be an admissible Banach sequence space. For $\mathbf s=(s_n)_{n\in \Z}\in B$ and $\lambda >0$, we define sequences $\mathbf s^i=(s_n^i)_{n\in \Z}$, $i=1, 2$ by 
\[
s_n^1:=\sum_{m\ge 0}e^{-\lambda m}s_{n-m} \quad \text{and} \quad s_n^2:=\sum_{m\ge 1}e^{-\lambda m}s_{n+m},
\]
for $n\in \Z$.  Then, $\mathbf s^1, \mathbf s^2\in B$ and in addition, 
\[
\lVert \mathbf s^1\rVert_B \le \frac{1}{1-e^{-\lambda}}\lVert \mathbf s\rVert_B \quad \text{and} \quad \lVert \mathbf s^2\rVert_B \le \frac{e^{-\lambda} }{1-e^{-\lambda}}\lVert \mathbf s\rVert_B.
\]
\end{proposition}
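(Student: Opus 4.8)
The plan is to realize $\mathbf s^1$ and $\mathbf s^2$ as norm-convergent series of shifts of $\mathbf s$ inside $B$, and then to pass the elementary geometric-series bound to the limit using the triangle inequality together with the shift-invariance of $\lVert\cdot\rVert_B$ from property (2). I would first record two consequences of admissibility. By solidity, for every $\mathbf x=(x_n)_{n\in\Z}\in B$ and every $n\in\Z$ one has $|x_n|=\lVert x_n\chi_{\{n\}}\rVert_B\le\lVert\mathbf x\rVert_B$: indeed $x_n\chi_{\{n\}}\in B$ with $\lVert x_n\chi_{\{n\}}\rVert_B=|x_n|\,\lVert\chi_{\{n\}}\rVert_B=|x_n|$, while $|x_n\chi_{\{n\}}(k)|\le|x_k|$ for all $k$. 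Hence every element of $B$ is a bounded sequence with $\sup_n|x_n|\le\lVert\mathbf x\rVert_B$, and convergence in $B$ forces coordinatewise convergence. In particular, for $\mathbf s\in B$ the series defining $s_n^1$ and $s_n^2$ converge absolutely, so $\mathbf s^1$ and $\mathbf s^2$ are well-defined sequences and are the coordinatewise limits of the partial sums $\mathbf s^{1,N}:=\bigl(\sum_{m=0}^N e^{-\lambda m}s_{n-m}\bigr)_{n\in\Z}$ and $\mathbf s^{2,N}:=\bigl(\sum_{m=1}^N e^{-\lambda m}s_{n+m}\bigr)_{n\in\Z}$, respectively.

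Next I would observe that, by property (2) used with $-m$ in place of $m$, the shifted sequence $(s_{n-m})_{n\in\Z}$ belongs to $B$ with norm $\lVert\mathbf s\rVert_B$; consequently each $\mathbf s^{1,N}$ is a finite linear combination of elements of $B$ and hence lies in $B$, and for $N>M\ge0$
\[
\lVert\mathbf s^{1,N}-\mathbf s^{1,M}\rVert_B\le\sum_{m=M+1}^N e^{-\lambda m}\lVert\mathbf s\rVert_B\le\frac{e^{-\lambda(M+1)}}{1-e^{-\lambda}}\lVert\mathbf s\rVert_B.
\]
Thus $(\mathbf s^{1,N})_N$ is a Cauchy sequence in $B$; since $B$ is complete it converges in $B$, and by the coordinatewise-convergence remark its limit must be $\mathbf s^1$. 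Therefore $\mathbf s^1\in B$, and letting $N\to\infty$ in $\lVert\mathbf s^{1,N}\rVert_B\le\sum_{m=0}^N e^{-\lambda m}\lVert\mathbf s\rVert_B$ yields $\lVert\mathbf s^1\rVert_B\le\frac{1}{1-e^{-\lambda}}\lVert\mathbf s\rVert_B$. The treatment of $\mathbf s^2$ is identical, now invoking property (2) with $+m$ and working with the partial sums $\mathbf s^{2,N}$; the only difference is that the geometric series starts at $m=1$, which produces the factor $\frac{e^{-\lambda}}{1-e^{-\lambda}}$.

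I do not anticipate a genuine obstacle, but the point deserving care is that a solid Banach sequence space need not possess a Fatou-type property — already $c_0$ fails it — so one should avoid trying to pass to the limit via a monotone-convergence argument on $|\mathbf s|$. The clean route is the one sketched above: because $(e^{-\lambda m})_{m\ge0}$ is summable and each shift is an isometry of $B$, the partial sums are automatically Cauchy in $B$, and completeness, together with the fact that $B$-convergence implies coordinatewise convergence, finishes the proof.
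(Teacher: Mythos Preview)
Your proof is correct. The paper does not actually prove this proposition; it merely cites it as \cite[Lemma 2.3]{Sasu}, so there is no ``paper's own proof'' to compare with. Your argument---realizing $\mathbf s^1$ and $\mathbf s^2$ as absolutely convergent series of isometric shifts in the Banach space $B$, and identifying the $B$-limit with the coordinatewise limit via the embedding $B\hookrightarrow l^\infty$ you derived from solidity and $\lVert\chi_{\{n\}}\rVert_B=1$---is the standard and cleanest route, and your caveat about avoiding Fatou-type reasoning (which indeed fails for $c_0$) is well taken.
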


\subsection{Banach spaces associated to Banach sequence spaces}
Let us now introduce sequence spaces that will play important role in our arguments. 
Let $X$ be an arbitrary Banach space and $B$ any Banach sequence space over $\Z$ with norm $\lVert \cdot \rVert_B$. Set
\[
X_B:=\bigg{\{} \mathbf x=(x_n)_{n\in \Z} \subset X: (\lVert x_n\rVert)_{n\in \Z}\in B \bigg{\}}.
\]
Finally, for $\mathbf x=(x_n)_{n\in \Z} \in X_B$ we define 
\begin{equation}\label{nn}
\lVert \mathbf x\rVert_B:=\lVert (\lVert x_n\rVert)_{n\in \Z}\rVert_B.
\end{equation}
\begin{remark}
We emphasize that in~\eqref{nn} we slightly abuse the notation since norms on  $B$ and $X_B$ are denoted in the same way. However, this will cause no confusion since in the rest of the paper we will deal with spaces $X_B$.
\end{remark}
\begin{example}
Let $B=l^\infty$ (see Example~\ref{ex1}). Then, 
\[
X_B=\bigg{\{} \mathbf x=(x_n)_{n\in \Z} \subset X: \sup_{n\in \Z} \lVert x_n\rVert<\infty \bigg{\}}.
\]
\end{example}
The proof of the following result is straightforward (see~\cite{DD, Sasu}).
\begin{proposition}
$(X_B, \lVert \cdot \rVert_B)$ is a Banach space. 
\end{proposition}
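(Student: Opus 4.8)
The plan is the routine two-step verification: first that $X_B$ is a normed linear space, then that it is complete. The algebraic and norm axioms are inherited from $B$ via solidity. If $\mathbf x=(x_n)_n,\mathbf y=(y_n)_n\in X_B$, then $\lVert x_n+y_n\rVert\le \lVert x_n\rVert+\lVert y_n\rVert$ for all $n$, so $(\lVert x_n+y_n\rVert)_n$ is pointwise dominated by $(\lVert x_n\rVert)_n+(\lVert y_n\rVert)_n\in B$ and hence lies in $B$ with $\lVert\mathbf x+\mathbf y\rVert_B\le\lVert\mathbf x\rVert_B+\lVert\mathbf y\rVert_B$; absolute homogeneity is immediate; and $\lVert\mathbf x\rVert_B=0$ forces $(\lVert x_n\rVert)_n=0$ in $B$, hence $\mathbf x=0$. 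The one additional fact needed for completeness is that each coordinate evaluation $\mathbf x\mapsto x_n$ is a bounded linear map $X_B\to X$: since $\lVert x_n\rVert\,\chi_{\{n\}}$ is pointwise dominated by $(\lVert x_m\rVert)_m$, solidity gives $\lVert x_n\rVert\,\lVert\chi_{\{n\}}\rVert_B\le\lVert\mathbf x\rVert_B$ (and if $\chi_{\{n\}}\notin B$ the same domination forces $x_n=0$ for every $\mathbf x\in X_B$, so that coordinate is irrelevant); in particular, when $B$ is admissible and normalized as above, $\lVert x_n\rVert\le\lVert\mathbf x\rVert_B$ for all $n$.

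For completeness I would take a Cauchy sequence $(\mathbf x^{(k)})_k$ in $X_B$, with $\mathbf x^{(k)}=(x_n^{(k)})_n$. Passing to a subsequence, I may assume $\lVert\mathbf x^{(k+1)}-\mathbf x^{(k)}\rVert_B\le 2^{-k}$; set $\mathbf z^{(k)}=\mathbf x^{(k+1)}-\mathbf x^{(k)}$. The partial sums $\mathbf w^{(M)}=\big(\sum_{k=1}^{M}\lVert z_n^{(k)}\rVert\big)_n$ lie in $B$ (each summand does, by solidity) and are Cauchy there, since $\lVert\mathbf w^{(M')}-\mathbf w^{(M)}\rVert_B\le\sum_{k=M+1}^{M'}\lVert\mathbf z^{(k)}\rVert_B$; by completeness of $B$ they converge, and since the coordinate evaluations are continuous the limit is the pointwise limit $\mathbf w=\big(\sum_{k\ge1}\lVert z_n^{(k)}\rVert\big)_n\in B$. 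Hence $\sum_k\lVert z_n^{(k)}\rVert<\infty$ for each $n$, so $\sum_k z_n^{(k)}$ converges absolutely in the Banach space $X$ and $x_n^{(k)}=x_n^{(1)}+\sum_{l=1}^{k-1}z_n^{(l)}\to x_n:=x_n^{(1)}+\sum_{l\ge1}z_n^{(l)}$. Finally $\lVert x_n-x_n^{(k)}\rVert\le\sum_{l\ge k}\lVert z_n^{(l)}\rVert=w_n-w_n^{(k-1)}$ for all $n$, and $\mathbf w-\mathbf w^{(k-1)}\in B$ with $\lVert\mathbf w-\mathbf w^{(k-1)}\rVert_B\le\sum_{l\ge k}\lVert\mathbf z^{(l)}\rVert_B\to0$, so solidity gives $\mathbf x-\mathbf x^{(k)}\in X_B$ (whence $\mathbf x\in X_B$) and $\lVert\mathbf x-\mathbf x^{(k)}\rVert_B\to0$. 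Thus the chosen subsequence converges to $\mathbf x$ in $X_B$, and a Cauchy sequence possessing a convergent subsequence converges; so $X_B$ is complete.

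I expect the only genuinely delicate point to be producing the limit in the completeness argument. The naive route — form the candidate $\mathbf x$, truncate it to finite index sets, bound the truncations in $B$, and pass to the limit — breaks down, because a Banach sequence space need not have the Fatou property: for instance in $B=c_0$ the indicators of $\{1,\dots,M\}$ are bounded in $B$ while their pointwise supremum is not in $B$, so membership in $B$ cannot be read off from a pointwise limit of $B$-bounded sequences. Passing first to an absolutely summable subsequence and applying completeness of $B$ to the single dominating sequence $\mathbf w$ is exactly what circumvents this; the rest is bookkeeping.
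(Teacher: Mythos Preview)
Your argument is correct. The paper does not actually supply a proof of this proposition: it simply declares the result straightforward and points to the references \cite{DD, Sasu}. Your write-up fills in the routine details and, more to the point, correctly isolates the one genuinely nontrivial step---passing to an absolutely summable subsequence so that completeness of $B$ can be applied to a single dominating scalar sequence, thereby avoiding any appeal to a Fatou-type property that a general Banach sequence space (e.g.\ $c_0$) need not possess.
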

\subsection{Exponential dichotomy and trichotomy}
In this subsection we recall the crucial concepts of exponential dichotomy and trichotomy. 
Let $I\in \{\Z, \Z_0^+, \Z_0^-\}$
and take  an arbitrary Banach space  $X=(X, \lVert \cdot \rVert)$. Finally, let $(A_m)_{m\in I}$ be a  sequence of bounded linear operators on $X$. For $m, n\in I$ such that $m\ge n$, set
\[
\cA(m,n)=\begin{cases}
A_{m-1}\cdots A_n & \text{if $m>n$,} \\
\Id & \text{if $m=n$.} 
\end{cases}
\]
We say that the sequence $(A_m)_{m\in I}$ admits an \emph{exponential dichotomy} (on $I$)  if:
\begin{enumerate}
\item there exists a sequence $(P_m)_{m\in I}$ of projections on $X$ such that  
\begin{equation}\label{P}
P_{m+1}A_m=A_mP_m
\end{equation}
 for each $m\in I$ such that $m+1\in I$;
\item  $A_m\rvert_{\Ker P_m} \colon \Ker P_m \to \Ker P_{m+1}$ is an invertible operator  for each $m\in I$ such that $m+1\in I$;
\item there exist $C, \lambda >0$ such that for $m, n\in I$, we have 
\begin{equation}\label{ED1}
\lVert \cA(m,n)P_n\rVert \le Ce^{-\lambda (m-n)} \quad \text{if $m\ge n$}
\end{equation}
and
\begin{equation}\label{ED2}
\lVert \cA(m,n)(\Id-P_n)\rVert \le Ce^{-\lambda (n-m)} \quad \text{if $m\le n$,}
\end{equation}
where 
\[
\cA(m, n):=\big{(}\cA(n, m)\rvert_{\Ker P_m} \big{)}^{-1} \colon \Ker P_n \to \Ker P_m,
\]
for $m\le n$. 
\end{enumerate}
We also introduce the notion of an exponential trichotomy. We say that  a sequence $(A_m)_{m\in \Z}$ of bounded linear operators on $X$ admits an \emph{exponential trichotomy} (on $\Z$)  if there exist $C, \lambda >0$ and projections $P_m^i$, $m\in \Z$, $i\in \{1, 2,3\}$ such that:
\begin{enumerate}
\item for $m\in \Z$, $P_m^1+P_m^2+P_m^3=\Id$;
\item for $m\in \Z$ and $i,j \in \{1, 2, 3\}$, $i\neq j$ we have that $P_m^iP_m^j=0$;
\item \[
P_{m+1}^iA_m=A_mP_m^i \quad \text{for $m\in \Z$ and $i\in \{1, 2, 3\}$;}
\]
\item $A_m\rvert_{\Ker P_m^1} \colon \Ker P_m^1 \to \Ker P_{m+1}^1$ is invertible for each $m\in \Z$;
\item for $m\ge n$, 
\begin{equation}\label{5:37}
\lVert \cA(m, n)P_n^1\rVert \le Ce^{-\lambda (m-n)};
\end{equation}
\item for $m\le n$,
\begin{equation}\label{5:38}
\lVert \cA(m,n)P_n^2\rVert \le Ce^{-\lambda (n-m)}, 
\end{equation}
where 
\[
\cA(m,n):=\big{(} \cA(n,m)\rvert_{\Ker P_m^1}\big{)}^{-1}\colon \Ker P_n^1 \to \Ker P_m^1;
\]
\item 
\begin{equation}\label{5:39}
\lVert \cA(m, n)P_n^3\rVert \le Ce^{-\lambda (m-n)} \quad \text{for $m \ge n$,}
\end{equation}
and
\begin{equation}\label{5:40}
\lVert \cA(m, n)P_n^3\rVert \le Ce^{-\lambda (n-m)} \quad \text{for $m \le n$.}
\end{equation}
\end{enumerate}
Obviously the notion of an exponential dichotomy on $\Z$ is a special case of the notion of an exponential trichotomy and corresponds to the case when $P_m^3=0$ for $m\in \Z$.

\begin{remark}
We stress that the notion of an exponential dichotomy was essentially introduced by Perron~\cite{Per30} and plays a central tole in the qualitative theory of nonautonomous systems (see~\cite{Co-78,Henry2}). For the case of infinite-dimensional and noninvertible dynamics with discrete time, the notion of an exponential dichotomy  was first studied by Henry~\cite{Henry2}.

Although extremely useful, the notion of an exponential dichotomy is sometimes restrictive and it is of interest to look for weaker concepts of asymptotic behaviour. The notion of an exponential trichotomy studied in this paper was introduced by Elaydi and Hajek~\cite{EH} (with further contributions  by Papaschinopoulos~\cite{P} and Alonso, Hong and  Obaya~\cite{AHO}) and 
represents one of many possible meaningful and useful  extensions of the notion of an exponential dichotomy. For the  study of a similar but different concept of trichotomy, we refer to~\cite{Pal96, SS1,SS2} and references therein.

\end{remark}

The following result is a  modification of~\cite[Proposition 2.3.]{AHO} or~\cite[Proposition 1.]{P}. More precisely, in contrast to~\cite{AHO,P} we don't restrict to the case when $B=l^\infty$.
\begin{theorem}\label{adm}
Assume that a sequence $(A_m)_{m\in \Z}$ admits an exponential trichotomy and let $B$ be an arbitrary admissible Banach sequence space. Then, there exists a bounded operator $G \colon X_B \to X_B$ such that for $\mathbf x=(x_n)_{n\in \Z}, \mathbf y=(y_n)_{n\in \Z}\in X_B$, the following assertions are equivalent:
\begin{enumerate}
\item $G\mathbf y=\mathbf x$;
\item for each $n\in \Z$,
\begin{equation}\label{ae}
x_{n+1}-A_n x_n=y_{n+1}.
\end{equation}
\end{enumerate}
\end{theorem}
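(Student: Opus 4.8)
The plan is to exhibit $G$ explicitly as a discrete Green's operator built from the projections $P_m^i$ and from the forward/backward cocycle $\cA$, to read off its boundedness from Proposition~\ref{bounds}, and then to prove the equivalence by two direct recursion arguments. Concretely, for $\mathbf y=(y_n)_{n\in\Z}\in X_B$ I would set
\[
(G\mathbf y)_n:=\sum_{k\le n}\cA(n,k)(P_k^1+P_k^3)y_k\;-\;\sum_{k>n}\cA(n,k)P_k^2 y_k,\qquad n\in\Z,
\]
where in the second sum $\cA(n,k)=\bigl(\cA(k,n)\rvert_{\Ker P_n^1}\bigr)^{-1}$ is applied to $P_k^2 y_k\in\Ima P_k^2\subset\Ker P_k^1$. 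Since by \eqref{5:39}--\eqref{5:40} the central part $P^3$ contracts in both time directions, it may be attached to either sum; above it is attached to the "past" sum, so that $\cA(n,k)$ there is always the genuine forward cocycle.

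Next I would check that $G$ is a well-defined bounded operator on $X_B$. Combining \eqref{5:37}, \eqref{5:38}, \eqref{5:39} and \eqref{5:40} one gets a pointwise bound of the form
\[
\lVert (G\mathbf y)_n\rVert\le C'\Bigl(\sum_{m\ge 0}e^{-\lambda m}\lVert y_{n-m}\rVert+\sum_{m\ge 1}e^{-\lambda m}\lVert y_{n+m}\rVert\Bigr),
\]
with $C'$ depending only on $C$; in particular both series defining $(G\mathbf y)_n$ converge absolutely. Applying Proposition~\ref{bounds} to the scalar sequence $\mathbf s=(\lVert y_n\rVert)_{n\in\Z}\in B$ and then using the solidness of $B$ (a sequence dominated pointwise by an element of $B$ again lies in $B$, with no larger norm) shows that $G\mathbf y\in X_B$ and $\lVert G\mathbf y\rVert_B\le \tfrac{3C'}{1-e^{-\lambda}}\lVert\mathbf y\rVert_B$; linearity is clear.

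For the implication (1)$\Rightarrow$(2), assume $\mathbf x=G\mathbf y$. Using $A_n\cA(n,k)=\cA(n+1,k)$ for $k\le n$, the invertibility of $A_n$ on $\Ker P_n^1$ (which yields $A_n\cA(n,k)P_k^2=\cA(n+1,k)P_k^2$ for $k>n$), and the commutation $P_{n+1}^i A_n=A_n P_m^i$, each of the two sums in $(G\mathbf y)_{n+1}-A_n(G\mathbf y)_n$ telescopes, leaving only the terms with $k=n+1$, whose sum is $(P_{n+1}^1+P_{n+1}^2+P_{n+1}^3)y_{n+1}=y_{n+1}$; hence \eqref{ae} holds. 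For (2)$\Rightarrow$(1), let $\mathbf x\in X_B$ satisfy \eqref{ae}. Since $B$ is admissible, every sequence in $B$ is bounded, so $\sup_n\lVert x_n\rVert\le\lVert\mathbf x\rVert_B<\infty$. Applying $P_n^1$ (resp.\ $P_n^3$) to \eqref{ae} gives the recursion $P_{n+1}^i x_{n+1}=A_n P_n^i x_n+P_{n+1}^i y_{n+1}$, which iterated from time $m$ up to time $n$ reads $P_n^i x_n=\cA(n,m)P_m^i x_m+\sum_{k=m+1}^n \cA(n,k)P_k^i y_k$; letting $m\to-\infty$ and using \eqref{5:37} (resp.\ \eqref{5:39}) together with the uniform bound on $\lVert x_m\rVert$ makes the first term vanish, so $P_n^i x_n=\sum_{k\le n}\cA(n,k)P_k^i y_k$. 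Applying $P_n^2$ to \eqref{ae}, using the invertibility on $\Ker P^1$ to express $P_n^2 x_n$ in terms of $P_m^2 x_m$ with $m\ge n$, and letting $m\to+\infty$ with \eqref{5:38} gives $P_n^2 x_n=-\sum_{k>n}\cA(n,k)P_k^2 y_k$. Adding the three components yields $\mathbf x=G\mathbf y$.

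The step I expect to need the most care is (2)$\Rightarrow$(1): one must set up the three one-component recursions with the correct index ranges and time directions — running forward for the stable and central parts, and backward (via the invertibility of $A_m$ on $\Ker P_m^1$) for the unstable part — and then verify that the transport terms $\cA(n,m)P_m^i x_m$ genuinely tend to $0$. This is precisely where admissibility of $B$ (forcing any $\mathbf x\in X_B$ to be bounded) and the two-sided control \eqref{5:39}--\eqref{5:40} of the central part enter; the remainder is bookkeeping for the Green's operator, with Proposition~\ref{bounds} carrying the analytic weight in the boundedness estimate.
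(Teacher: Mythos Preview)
Your Green's operator differs from the paper's precisely where trichotomy differs from dichotomy. You attach the central component $P^3$ to the stable sum uniformly over $k\le n$, whereas the paper's kernel $\mathcal G(n,m)$ switches the role of $P^3$ at the origin: it is grouped with $P^1$ for $m>0$ and with $\Id-P^1$ (i.e.\ with $P^2$) for $m\le 0$. This is essential. In the Elaydi--Hajek trichotomy the paper is working with (see \cite{EH,P,AHO}), estimate~\eqref{5:39} is only available when the base index is $\ge 0$ and~\eqref{5:40} only when it is $\le 0$; the hypotheses are stated a bit loosely in the paper, but the four-case definition of $\mathcal G(n,m)$ makes the intended ranges unambiguous, and indeed if \eqref{5:39}--\eqref{5:40} held for \emph{all} $m,n$ as you use them, a two-line argument forces $P^3\equiv 0$ and the notion collapses to dichotomy. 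Under the correct ranges your forward series $\sum_{k\le n}\cA(n,k)P_k^3 y_k$ is uncontrolled for $k<0$ (there the central part behaves like an unstable direction going forward), so your boundedness estimate and hence your very definition of $G$ break down.

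The same issue invalidates your argument for (2)$\Rightarrow$(1) at the $P^3$ step: to obtain $\cA(n,m)P_m^3 x_m\to 0$ as $m\to-\infty$ you invoke \eqref{5:39} in a regime where it is unavailable. In fact this implication is false in the genuine trichotomy setting: any nonzero $v\in\Ima P_0^3$ produces an orbit $x_m=\cA(m,0)v$ that decays at both ends and hence lies in $X_B$, so solutions of~\eqref{ae} in $X_B$ are not unique. The paper accordingly proves only (1)$\Rightarrow$(2) here (deferring the recurrence check to \cite[Proposition~1]{P}) and states uniqueness separately, under the stronger dichotomy hypothesis, in Theorem~\ref{adm2}; the word ``equivalent'' in the present statement should be read with this in mind. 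Your handling of the $P^1$ and $P^2$ components and your use of Proposition~\ref{bounds} are fine; the gap is entirely in the treatment of $P^3$.
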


\begin{proof}
For each $n, m\in \Z$, set 
\[
\mathcal G(n,m):=\begin{cases}
\cA(n,m)P_m^1 & \text{if $m\le 0\le n$ or $m\le n\le 0$;}\\
-\cA(n,m)(\Id-P_m^1) & \text{if $n<m\le 0$;}\\
\cA(n,m)(\Id-P_m^2) & \text{if $0<m\le n$;} \\
-\cA(n,m)P_m^2 & \text{if $0\le n<m$ or $n\le 0<m$.}
\end{cases}
\]
Observe that it follows readily from~\eqref{5:37}, \eqref{5:38}, \eqref{5:39} and~\eqref{5:40} that
\begin{equation}\label{green}
\lVert \mathcal  G(n,m)\rVert \le 2Ce^{-\lambda \lvert m-n\rvert} \quad \text{for $m, n\in \Z$.}
\end{equation}
For $\mathbf y=(y_n)_{n\in \Z}$ and $n\in \Z$, let 
\[
(G\mathbf y)_n:=\sum_{m\in \Z}\mathcal G(n,m+1)y_{m+1}.
\]
Observe that~\eqref{green} implies that
\[
\begin{split}
\lVert (G\mathbf y)_n \rVert &\le \sum_{m=-\infty}^{n-1}\lVert \mathcal G(n,m+1)y_{m+1}\rVert +\sum_{m=n}^\infty \lVert \mathcal G(n,m+1)y_{m+1}\rVert  \\
&\le 2C\sum_{m=-\infty}^{n-1} e^{-\lambda (n-m-1)}\lVert y_{m+1}\rVert +2C \sum_{m=n}^\infty e^{-\lambda (m+1-n)}\lVert y_{m+1}\rVert,
\end{split}
\]
for $n\in \Z$. Hence, it follows from Proposition~\ref{bounds} that $G\mathbf y\in Y_B$ and 
\begin{equation}\label{7:55}
\lVert G\mathbf y\rVert_B \le 2C\frac{1+e^{-\lambda}}{1-e^{-\lambda}}\lVert \mathbf y\rVert_B. 
\end{equation}
Finally, in~\cite[Proposition 1.]{P} it is proved that $\mathbf x=G\mathbf y$ satisfies~\eqref{ae}.
\end{proof}
In the case of exponential dichotomy we can say more. More precisely, we have the following result established in~\cite[Theorem 3.5.]{Sasu}.
\begin{theorem}\label{adm2}
Assume that a sequence $(A_m)_{m\in \Z}$ admits an exponential dichotomy  and let $B$ be an arbitrary admissible Banach sequence space. Then, for each $\mathbf y=(y_n)_{n\in \Z}\in X_B$ there exists a unique $\mathbf x=(x_n)_{n\in \Z}\in X_B$ such that~\eqref{ae} holds.
Furthermore, $\mathbf x=G\mathbf y$, where $G$ is as in the statement of Theorem~\ref{adm}.
\end{theorem}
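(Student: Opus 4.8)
The plan is to get existence (together with the formula $\mathbf x=G\mathbf y$) for free from Theorem~\ref{adm}, and then to prove uniqueness by hand using the dichotomy estimates \eqref{ED1}--\eqref{ED2}.

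First I would note that an exponential dichotomy is exactly an exponential trichotomy with $P_m^3=0$, $P_m^1=P_m$, $P_m^2=\Id-P_m$; under this identification the Green kernel $\mathcal G(n,m)$ defined in the proof of Theorem~\ref{adm} collapses to the classical dichotomy kernel, namely $\mathcal G(n,m)=\cA(n,m)P_m$ for $m\le n$ and $\mathcal G(n,m)=-\cA(n,m)(\Id-P_m)$ for $m>n$ (one checks this case by case from the four-line definition, using $P_m^1=P_m$ and $P_m^2=\Id-P_m$). Consequently Theorem~\ref{adm} already provides a bounded operator $G\colon X_B\to X_B$ for which $\mathbf x:=G\mathbf y\in X_B$ satisfies \eqref{ae}. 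This settles both the existence claim and the last sentence of the statement.

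For uniqueness, suppose $\mathbf x,\mathbf x'\in X_B$ both satisfy \eqref{ae}, and set $\mathbf z:=\mathbf x-\mathbf x'\in X_B$. Then $z_{n+1}=A_nz_n$ for all $n$, so $z_m=\cA(m,n)z_n$ whenever $m\ge n$. The elementary fact I would invoke is that, since $B$ is admissible, $\lVert\chi_{\{n\}}\rVert_B=1$ together with the lattice property forces $\lVert z_n\rVert\le\lVert\mathbf z\rVert_B$ for every $n\in\Z$; in particular $(\lVert z_n\rVert)_{n\in\Z}$ is bounded. Now write $z_n=P_nz_n+(\Id-P_n)z_n$. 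For $n\ge k$, the invariance \eqref{P} gives $P_nz_n=\cA(n,k)P_kz_k$, so by \eqref{ED1} we get $\lVert P_nz_n\rVert\le Ce^{-\lambda(n-k)}\lVert\mathbf z\rVert_B\to 0$ as $k\to-\infty$, whence $P_nz_n=0$. Symmetrically, for $k\ge n$ the invertibility of $\cA(k,n)$ on $\Ker P_n$ and \eqref{P} yield $(\Id-P_n)z_n=\cA(n,k)(\Id-P_k)z_k$, so \eqref{ED2} gives $\lVert(\Id-P_n)z_n\rVert\le Ce^{-\lambda(k-n)}\lVert\mathbf z\rVert_B\to 0$ as $k\to+\infty$, whence $(\Id-P_n)z_n=0$. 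Therefore $z_n=0$ for all $n$, i.e.\ $\mathbf x=\mathbf x'$.

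I do not anticipate a real obstacle. The only points needing a modicum of care are the uniform bound $\lVert z_n\rVert\le\lVert\mathbf z\rVert_B$ (without it, sending $k\to\pm\infty$ in the two estimates would not obviously annihilate the limit) and the correct bookkeeping of the commutation and invertibility identities $P_nz_n=\cA(n,k)P_kz_k$ and $(\Id-P_n)z_n=\cA(n,k)(\Id-P_k)z_k$; the rest is a direct appeal to Theorem~\ref{adm} and to the definition of an exponential dichotomy.
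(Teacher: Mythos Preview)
Your argument is correct. The paper, however, does not supply its own proof of this theorem: it simply invokes~\cite[Theorem~3.5]{Sasu} and states the result. So there is no in-paper proof to compare against; what you have written is a self-contained substitute for the external citation.

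Your route is the natural one in this setting and exactly what a reader would reconstruct: existence follows at once by specialising Theorem~\ref{adm} (your case-by-case verification that $\mathcal G(n,m)$ collapses to the classical dichotomy Green kernel is correct), and uniqueness comes from the standard observation that a bounded full trajectory of a dichotomic system must vanish. The only point worth tightening slightly is the phrasing of the pointwise bound: what you use is that for any $n$ the sequence $\lVert z_n\rVert\chi_{\{n\}}$ is termwise dominated by $(\lVert z_m\rVert)_{m\in\Z}$, so the lattice property together with $\lVert\chi_{\{n\}}\rVert_B=1$ gives $\lVert z_n\rVert\le\lVert\mathbf z\rVert_B$. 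With that in hand, your two limits $k\to-\infty$ and $k\to+\infty$ using \eqref{ED1} and \eqref{ED2} are exactly right, and the commutation identities $P_nz_n=\cA(n,k)P_kz_k$ and $(\Id-P_n)z_n=\cA(n,k)(\Id-P_k)z_k$ follow from \eqref{P} and the invertibility on $\Ker P_m$ as you indicate.
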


\section{Main result}\label{MR}
\subsection{Setup} \label{sec: setup}
Let $B$ be an admissible Banach sequence space,  $X$  a  Banach space and $(A_m)_{m\in \Z}$ a sequence of  bounded linear operators on $X$ that admits an exponential trichotomy. 
Furthermore, let $f_n \colon X\to X$, $n\in \Z$ be a sequence of  maps such that there exists $c>0$  so that
\begin{equation}\label{fg}
\lVert f_n(x)-f_n(y)\rVert \le c\lVert x-y\rVert,
\end{equation}
for each $n\in \Z$ and $x, y\in X$.

We consider a nonautonomous and nonlinear dynamics defined by the equation
\begin{equation}\label{nnd}
x_{n+1}=F_n(x_n),  \quad n\in \Z,
\end{equation}
where \[F_n:=A_n+f_n.\]

Let us now recall some notation introduced in~\cite{BD19}.
Given $\delta >0$, the sequence $(y_n)_{n\in \Z} \subset X$ is said to be an $(\delta, B)$-\emph{pseudotrajectory} for~\eqref{nnd} if  
$(y_{n+1}-F_n(y_n))_{n\in \Z}\in X_B$ and 
\begin{equation}\label{pseudo}
\lVert (y_{n+1}-F_n(y_n))_{n\in \Z} \rVert_B \le \delta. 
\end{equation}
\begin{remark}
When $B=l^\infty$ (see Example~\ref{ex1}), condition~\eqref{pseudo} reduces to 
\[
\sup_{n\in \Z}  \lVert y_{n+1}-F_n(y_n) \rVert \le \delta.
\]
The above requirement represents a usual definition of a pseudotrajectory in the context of smooth dynamics (see~\cite{Pal00, Pil99}). 
\end{remark}

We say that~\eqref{nnd} has a \emph{$B$-shadowing property} if for every $\varepsilon>0$ there exists $\delta >0$ so that for every $(\delta, B)$-pseudotrajectory $(y_n)_{n\in \Z}$, there exists a sequence $(x_n)_{n\in \Z}$ satisfying \eqref{nnd}  and such that 
$(x_n-y_n)_{n\in \Z} \in X_B$ together with
\begin{equation}\label{wv}\lVert (x_n-y_n)_{n\in \Z}\rVert_B \le \varepsilon.\end{equation}
Moreover, if there exists $L>0$ such that $\delta$ can be chosen as $\delta=L\epsilon$, we say that~\eqref{nnd} has the \emph{$B$-Lipschitz shadowing property}.

Let $G \colon X_B \to X_B$ be a linear operator given by Theorem~\ref{adm}.
\begin{theorem}\label{cor: uniqueness}
Assume that 
\begin{equation}\label{cG}
c\lVert G\rVert<1. 
\end{equation}
Then, the  system~\eqref{nnd} has the  $B$-Lipschitz shadowing property. 
\end{theorem}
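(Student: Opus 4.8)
The plan is to reformulate the shadowing requirement as a fixed point problem for the correction sequence $\mathbf z=(z_n)_{n\in\Z}$, where $z_n=x_n-y_n$, and to solve it by combining the operator $G$ from Theorem~\ref{adm} (which inverts the linear part) with the Banach fixed point theorem on the Banach space $X_B$.

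First I would fix a $(\delta,B)$-pseudotrajectory $(y_n)_{n\in\Z}$ for~\eqref{nnd}, set $d_n:=y_{n+1}-F_n(y_n)$ so that $(d_n)_{n\in\Z}\in X_B$ with $\lVert(d_n)_{n\in\Z}\rVert_B\le\delta$, and observe that a sequence $(y_n+z_n)_{n\in\Z}$ satisfies~\eqref{nnd} if and only if, for every $n\in\Z$,
\[
z_{n+1}-A_nz_n=w_{n+1},\qquad\text{where}\quad w_{n+1}:=-d_n+\bigl(f_n(y_n+z_n)-f_n(y_n)\bigr).
\]
Given $\mathbf z\in X_B$, the Lipschitz bound~\eqref{fg} yields $\lVert w_{n+1}\rVert\le\lVert d_n\rVert+c\lVert z_n\rVert$, and since $B$ is admissible (hence solid and invariant under shifts) the sequence $\mathbf w=(w_n)_{n\in\Z}$ lies in $X_B$ with $\lVert\mathbf w\rVert_B\le\delta+c\lVert\mathbf z\rVert_B$. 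I can therefore define $\mathcal T\colon X_B\to X_B$ by $\mathcal T(\mathbf z):=G\mathbf w$; by the equivalence in Theorem~\ref{adm} applied to the inhomogeneity $\mathbf w$, a sequence $\mathbf z\in X_B$ is a fixed point of $\mathcal T$ if and only if the displayed relation holds, i.e.\ if and only if $(y_n+z_n)_{n\in\Z}$ is a genuine trajectory of~\eqref{nnd}.

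Next I would check that $\mathcal T$ is a contraction. If $\mathbf z^1,\mathbf z^2\in X_B$ produce $\mathbf w^1,\mathbf w^2$, then $w^1_{n+1}-w^2_{n+1}=f_n(y_n+z^1_n)-f_n(y_n+z^2_n)$, so~\eqref{fg} and shift-invariance of the norm give $\lVert\mathbf w^1-\mathbf w^2\rVert_B\le c\lVert\mathbf z^1-\mathbf z^2\rVert_B$, whence
\[
\lVert\mathcal T(\mathbf z^1)-\mathcal T(\mathbf z^2)\rVert_B\le\lVert G\rVert\,\lVert\mathbf w^1-\mathbf w^2\rVert_B\le c\lVert G\rVert\,\lVert\mathbf z^1-\mathbf z^2\rVert_B.
\]
By~\eqref{cG} the constant $c\lVert G\rVert$ is $<1$, so $\mathcal T$ has a unique fixed point $\mathbf z\in X_B$. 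Since $\mathcal T(\mathbf 0)=G\mathbf w^0$ with $w^0_{n+1}=-d_n$, we have $\lVert\mathcal T(\mathbf 0)\rVert_B\le\lVert G\rVert\,\delta$, and the standard estimate
\[
\lVert\mathbf z\rVert_B\le\lVert\mathcal T(\mathbf z)-\mathcal T(\mathbf 0)\rVert_B+\lVert\mathcal T(\mathbf 0)\rVert_B\le c\lVert G\rVert\,\lVert\mathbf z\rVert_B+\lVert G\rVert\,\delta
\]
gives $\lVert\mathbf z\rVert_B\le\frac{\lVert G\rVert}{1-c\lVert G\rVert}\,\delta$. Hence, given $\varepsilon>0$, choosing $\delta:=L\varepsilon$ with $L:=\frac{1-c\lVert G\rVert}{\lVert G\rVert}>0$ (the degenerate case $X=\{0\}$ being trivial) produces a trajectory $(x_n)_{n\in\Z}=(y_n+z_n)_{n\in\Z}$ of~\eqref{nnd} with $(x_n-y_n)_{n\in\Z}\in X_B$ and $\lVert(x_n-y_n)_{n\in\Z}\rVert_B\le\varepsilon$, which is precisely the $B$-Lipschitz shadowing property.

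The conceptual core, a contracting map together with the Banach fixed point theorem, is routine; the points that require care are (i) verifying that $\mathbf w$ and $\mathbf w^1-\mathbf w^2$ belong to $X_B$ with the stated norm bounds, which relies exactly on the admissibility of $B$ (solidity together with the invariance $\lVert\mathbf s^m\rVert_B=\lVert\mathbf s\rVert_B$), and (ii) reconciling the index convention of~\eqref{ae} (whose inhomogeneity carries the index $n+1$) with the definition of $w_{n+1}$. I expect (i) to be the only genuine, though mild, obstacle: once Theorem~\ref{adm} provides the bounded operator $G$ inverting the linear dynamics on $X_B$, the remainder is the standard perturbation argument.
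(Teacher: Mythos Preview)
Your argument is correct and essentially identical to the paper's: both recast the shadowing problem as a fixed-point equation for the correction sequence $\mathbf z=\mathbf x-\mathbf y$ via the operator $G$ of Theorem~\ref{adm}, and conclude with the Banach contraction principle using the hypothesis $c\lVert G\rVert<1$. The only cosmetic difference is that the paper applies the contraction on the closed ball of radius $\epsilon$ in $X_B$ (verifying that the ball is mapped into itself), whereas you contract on all of $X_B$ and derive the bound $\lVert\mathbf z\rVert_B\le\epsilon$ a posteriori from $\lVert\mathcal T(\mathbf 0)\rVert_B\le\lVert G\rVert\,\delta$; note also that only the direction ``$G\mathbf w=\mathbf z\Rightarrow$~\eqref{ae}'' of Theorem~\ref{adm} is actually needed, so your ``if and only if'' can be weakened without loss.
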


\begin{proof}
Take an arbitrary $\epsilon >0$ and let
\begin{equation}\label{k}
K:=\frac{\lVert G\rVert}{1-c\lVert G\rVert}.
\end{equation}
Finally, set $\delta:=\frac{\epsilon}{K}>0$ and take an arbitrary  $(\delta, B)$-pseudotrajectory $\mathbf y=(y_n)_{n\in \Z}\subset X$ of~\eqref{nnd}.  For $n\in \Z$, we define $g_n \colon X\to X$ by 
\[
g_n(v)=f_n(y_n+v)-f_n(y_n)+F_n(y_n)-y_{n+1}, \quad v\in X. 
\]
Furthermore, for $\mathbf x=(x_n)_{n\in \Z}\in X_B$, let $S(\mathbf x)$ be the  sequence defined by 
\[
(S(\mathbf x))_n=g_{n-1}(x_{n-1}), \quad n\in \Z.
\]
Observe that it follows from~\eqref{fg} and~\eqref{pseudo} that  $S (\mathbf x)\in X_B$. Finally, set 
\[
T(\mathbf x)=GS(\mathbf x).
\]
We claim that $T$ is a contraction on 
\[
D(\mathbf 0, \epsilon):=\{\mathbf x\in X_B: \lVert \mathbf x\rVert_B\le \epsilon \}.
\]
Indeed, let us choose $\mathbf x^1=(x_n^1)_{n\in \Z}$ and $\mathbf x^2=(x_n^2)_{n\in \Z}$ that belong to $D(\mathbf 0, \epsilon)$. Observe that it follows from~\eqref{fg} that
\[
\begin{split}
\lVert g_n(x_n^1)-g_n(x_n^2)\rVert &=\lVert f_n(y_n+x_n^1)-f_n(y_n+x_n^2)\rVert  \\
&\le c\lVert x_n^1-x_n^2\rVert,
\end{split}
\]
for $n\in \Z$. Hence, 
\[
\lVert S(\mathbf x^1)-S(\mathbf x^2)\rVert_B \le c\lVert \mathbf x^1-\mathbf x^2\rVert_B.
\]
Consequently, 
\[
\lVert T(\mathbf x^1)-T(\mathbf x^2)\rVert_B \le \lVert G\rVert \cdot \lVert S(\mathbf x^1)-S(\mathbf x^2)\rVert_B \le c\lVert G\rVert \cdot  \lVert \mathbf x^1-\mathbf x^2\rVert_B.
\]
Hence, \eqref{cG}  implies  that $T$ is a contraction on $D(\mathbf 0, \epsilon)$.

We now show that $T$ maps $D(\mathbf 0, \epsilon)$ into itself. Take an arbitrary  $\mathbf x\in D(\mathbf 0, \epsilon)$. We have that 
\[
\begin{split}
\lVert T(\mathbf x)\rVert_B &\le \lVert T(\mathbf 0)\rVert_B+\lVert T(\mathbf x)-T(\mathbf 0)\rVert_B \\
&\le \lVert G\rVert \cdot  \lVert S(\mathbf 0)\rVert_B+c\lVert G\rVert \cdot \lVert \mathbf x\rVert_B \\
&\le \lVert G\rVert \cdot  \lVert  S(\mathbf 0)\rVert_B+ \epsilon c\lVert G\rVert.
\end{split}
\]
Since $\mathbf y=(y_n)_{n\in \Z}$ is an $(\delta, B)$-pseudotrajectory, we have that $\lVert S(\mathbf 0)\rVert_B\le \delta=\frac{\epsilon}{K}$ and consequently
\[
\lVert T(\mathbf x)\rVert_B \le \epsilon \bigg{(}\frac{\lVert G\rVert}{K}+c\lVert G\rVert \bigg{)}=\epsilon,
\]
where in the last equality we used~\eqref{k}.

We conclude  that $T$ has a fixed point $\mathbf x=(x_n)_{n\in \Z} \in D(\mathbf 0, \epsilon)$. Hence, $\mathbf x=GS(\mathbf x)$. In a view of Theorem~\ref{adm}, we deduce that 
\[
x_n=A_{n-1}x_{n-1}+g_{n-1}(x_{n-1}) \quad \text{for $n\in \Z$.}
\]
Therefore, $\mathbf x+\mathbf y=(x_n+y_n)_{n\in \Z}$ is a solution of~\eqref{nnd} and
\[
\lVert \mathbf x+\mathbf y-\mathbf y\rVert_B=\lVert \mathbf x\rVert_B\le \epsilon.
\]
This completes the proof of the theorem. 
\end{proof}
\begin{remark}
Observe that it follows from~\eqref{7:55} that~\eqref{cG} holds for any $c$ such that
\[
0<c<\frac{1-e^{-\lambda}}{2C(1+e^{-\lambda})},
\]
where $C, \lambda>0$ are the constants associated with the trichotomy of $(A_m)_{m\in \Z}$.
\end{remark}

\begin{remark}
Let us now briefly describe the relationship between Theorem~\ref{cor: uniqueness} and  the results dealing with the shadowing of structurally stable diffeomorphisms discussed in~\cite[Chapter 2]{Pil99}. Let $M$ be a compact Riemannian manifold and let $f\colon M \to M$ be a $C^1$-structurally stable diffeomorphism.
We recall that this means that there exists $\delta >0$ with the property that for each $C^1$-diffeomorphism $g$ on $M$ such that $d_{C^1}(f, g)<\delta$, there exists a homeomorphism $h\colon M\to M$ satisfying $h\circ f=g\circ h$. Here, $d_{C^1}(f,g)$ is given by
\[
d_{C^1}(f,g):=\sup_{p\in M}d(f(p), g(p))+\sup_{p\in M}\lVert Df(p)-Dg(p)\rVert,
\]
where $d$ is a distance on $M$ induced by the Riemannian metric.  Furthermore, we recall (see~\cite[Theorem 2.2.4]{Pil99}) that a $C^1$-diffeomorphism on $M$ is structurally stable if and only if the following two conditions hold:
\begin{itemize}
\item $f$ is uniformly hyperbolic on the set $\Omega(f)$ of its nonwandering points and the set of periodic points of $f$ is dense in $\Omega(f)$;
\item for every $p, q\in \Omega (f)$,  the stable manifold $W^s(p)$  and the unstable manifold $W^u(q)$  are transverse.
\end{itemize}

Take now a structurally stable $C^1$-diffeomorphism $f$ on $M$. It is known (see~\cite[Lemma 2.2.16]{Pil99}) that for each $p\in M$ there are subspaces $S(p)$ and $U(p)$ of the tangent space $T_pM$ with the following properties:
\begin{itemize}
\item $T_pM=S(p)\oplus U(p)$;
\item $Df(p)S(p)\subset S(f(p))$ and $Df^{-1}(p)U(p)\subset U(f^{-1}(p))$;
\item there exist $C, \lambda >0$ with the property that 
\begin{equation}\label{es1}
\lVert D f^k(p)\Pi^s(p) v\rVert \le Ce^{-\lambda k}\lVert v\rVert \quad \text{for $v\in T_pM$ and $k\ge 0$}
\end{equation}
and
\begin{equation}\label{es2}
\lVert D f^{-k}(p)\Pi^u(p)v\rVert  \le Ce^{-\lambda k}\lVert v\rVert \quad \text{for $v\in T_pM$ and $k\ge 0$,}
\end{equation}
where $\Pi^s(p)$ is a projection onto $S(p)$ and $\Pi^u(p)$ is a projection onto $U(p)$.
\end{itemize}
In other words, $f$ possesses a structure similar to that of an Anosov diffeomorphism with the only difference being that $S(p)$ and $U(p)$ are not invariant under the action of $Df$. 
Using these and some additional properties, one can show (see~\cite[Theorem 2.2.7]{Pil99}) that $f$ has the Lipschitz shadowing property. In addition, Pilyugin and Tikhomirov~\cite{PT} proved that the converse statement also holds. More precisely, if $f$ is a $C^1$-diffeomorphism with the Lipschitz shadowing property, then $f$ is structurally stable. 

Let us now comment on how these results relate to ours. We continue to consider a structurally stable $C^1$-diffeomorphism $f$ on $M$. In addition, assume that the tangent bundle $TM$ is isomorphic to $\R^k$, where $k=\dim M$. 
Take $p\in M$ and let
\[
A_n=Df (f^n(p)), \quad n\in \Z.
\]
We claim that the sequence $(A_n)_{n\in \Z}$ admits an exponential trichotomy. Indeed, take an arbitrary $\mathbf y=(y_n)_{n\in \Z}\subset \R^k$ such that $\lVert \mathbf y\rVert_\infty:=\sup_{n\in \Z}\lVert y_n\rVert<\infty$. For $n\in \Z$, set
\[
\begin{split}
x_n &=\sum_{k=0}^\infty Df^k (f^{n-k}(p))\Pi^s (f^{n-k}(p))y_{n-k} \\
&\phantom{=}-\sum_{k=1}^\infty Df^{-k}(f^{n+k} (p))\Pi^u(f^{n+k}(p))y_{n+k}.
\end{split}
\]
By~\eqref{es1} and~\eqref{es2}, we have that
\[
\lVert \mathbf x\rVert_\infty=\sup_{n\in \Z}\lVert x_n\rVert \le C\frac{1+e^{-\lambda}}{1-e^{-\lambda}}\lVert \mathbf y\rVert_\infty, 
\]
where $\mathbf x=(x_n)_{n\in \Z}$. In addition, it is easy to verify that 
\[
x_{n+1}-A_nx_n=y_{n+1}, \quad \text{for $n\in \Z$.}
\]
Hence, it follows from~\cite[Proposition 1]{P} that $(A_n)_{n\in \Z}$ admits an exponential trichotomy.  By taking into account that we can regard $f$ as the perturbation of $Df$, we observe that this setting is similar to the one studied in the present paper. However, there are important differences. Namely, in order
to study the shadowing property for a  dynamics acting  on a  noncompact phase space, we first start with a linear dynamics that admits an exponential trichotomy and then introduce a suitable class of nonlinear perturbations which exhibit the shadowing property. This class is completely determined by the constants in the notion of exponential trichotomy (associated to the linear part). 
Besides this important difference,  in contrast to the above mentioned results from~\cite{Pil99}, 
in the present paper  we consider the general case of  nonautonomous and  noninvertible dynamics that acts on an arbitrary Banach space.
\end{remark}
Our results in particular  apply to the case of linear dynamics
\begin{equation}\label{ld}
x_{n+1}=A_n x_n, \quad n\in \Z.
\end{equation}

\begin{corollary}
System~\eqref{ld} has the $B$-Lipschitz shadowing property, for any admissible Banach sequence space $B$. 
\end{corollary}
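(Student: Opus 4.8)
The plan is to derive this corollary directly from Theorem~\ref{cor: uniqueness} by observing that the linear dynamics \eqref{ld} is the special case of the nonautonomous nonlinear dynamics \eqref{nnd} in which each perturbation $f_n$ is identically zero. With $f_n\equiv 0$ the Lipschitz constant from \eqref{fg} may be taken to be any $c>0$, in particular $c=0$.

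With $c=0$ the hypothesis \eqref{cG} of Theorem~\ref{cor: uniqueness} reads $0\cdot\lVert G\rVert<1$, which holds trivially since $\lVert G\rVert<\infty$ by Theorem~\ref{adm}. Hence Theorem~\ref{cor: uniqueness} applies and yields at once that \eqref{ld} has the $B$-Lipschitz shadowing property, with the explicit constant $L=1/K=1/\lVert G\rVert$ coming from \eqref{k} evaluated at $c=0$ (so that one may take $\delta=\lVert G\rVert^{-1}\epsilon$). Since $B$ was an arbitrary admissible Banach sequence space throughout the Setup in Section~\ref{sec: setup}, the conclusion holds for every such $B$.

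There is really no obstacle here: the only thing to check is the (degenerate) edge case $c=0$ in the statement of Theorem~\ref{cor: uniqueness}, and one quickly verifies that the proof of that theorem goes through verbatim when $c=0$ — indeed in that case the map $S$ is constant, $T(\mathbf x)=GS(\mathbf 0)$ for all $\mathbf x$, and $T$ is a contraction with constant $0$ mapping $D(\mathbf 0,\epsilon)$ into itself because $\lVert GS(\mathbf 0)\rVert_B\le\lVert G\rVert\delta=\epsilon$. Alternatively, for purists who prefer to keep $c>0$ strictly, one may simply regard \eqref{ld} as a perturbation of itself with any fixed small $c>0$ satisfying $c\lVert G\rVert<1$; the associated nonlinear dynamics $F_n=A_n+f_n$ with $f_n\equiv 0$ is literally \eqref{ld}, so the shadowing statement transfers immediately.
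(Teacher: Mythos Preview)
Your proof is correct and takes essentially the same approach as the paper, which simply notes that the corollary follows by applying Theorem~\ref{cor: uniqueness} with $f_n\equiv 0$ for all $n\in\Z$. Your additional remarks handling the borderline case $c=0$ versus $c>0$ are more careful than the paper's one-line proof, but the underlying idea is identical.
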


\begin{proof}
The desired conclusion follows by applying Theorem~\ref{cor: uniqueness} in the particular case when $f_n=0$, $n\in \Z$.
\end{proof}

Now we obtain  a partial converse to the previous corollary.
\begin{proposition}\label{257}
Assume that $X$ is finite-dimensional and that $(A_m)_{m\in \Z}$ is a sequence of   linear operators on $X$ such that~\eqref{ld} has the $l^\infty$-shadowing. Furthermore, suppose that $A_m$ is invertible for each $m\in \Z$.
 Then, $(A_m)_{n\in \Z}$ admits an exponential trichotomy.
\end{proposition}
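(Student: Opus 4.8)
The strategy is to exploit the admissibility characterization of exponential trichotomy in reverse. By hypothesis, \eqref{ld} has the $l^\infty$-shadowing property, which means: for every $\epsilon>0$ there is $\delta>0$ such that every $(\delta,l^\infty)$-pseudotrajectory $(y_n)$ is $\epsilon$-shadowed in the $l^\infty$ norm by a genuine solution. Since the dynamics is linear, the difference $\mathbf z=(z_n)$ between a pseudotrajectory and its shadowing solution satisfies $z_{n+1}-A_nz_n=-(y_{n+1}-A_ny_n)=:-w_{n+1}$, where $(w_n)\in X_B$ with $\lVert\mathbf w\rVert_\infty\le\delta$. Rescaling (using linearity and that $\delta$ may be taken proportional to $\epsilon$ after passing through the shadowing with, say, $\epsilon=1$), I would first upgrade the qualitative shadowing to the statement: \emph{for every $\mathbf w=(w_n)\in X_{l^\infty}$ there exists $\mathbf x=(x_n)\in X_{l^\infty}$ with $x_{n+1}-A_nx_n=w_{n+1}$ for all $n$.} In other words, the operator $\mathbf x\mapsto (x_{n+1}-A_nx_n)_n$ from $X_{l^\infty}$ to $X_{l^\infty}$ is surjective. (One must be slightly careful with index shifts, but since $l^\infty$ is shift-invariant this is harmless.)

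The next step is to show that this surjectivity, together with finite-dimensionality of $X$ and invertibility of each $A_m$, forces an exponential trichotomy. This is precisely the content of the admissibility theory: one invokes the converse direction of results such as \cite[Proposition 2.3]{AHO} or \cite[Proposition 1]{P} (or the analogous statement in \cite{P, AHO} characterizing trichotomy via solvability of the nonhomogeneous equation in $l^\infty$). Concretely, since every $A_m$ is invertible and $\dim X<\infty$, the family $(A_m)$ generates a well-defined two-sided cocycle $\cA(m,n)$ for all $m,n\in\Z$, and the admissibility (surjectivity of the difference operator on $l^\infty$) yields bounded "Green-type" operators from which the projections $P_m^i$ and the exponential estimates \eqref{5:37}--\eqref{5:40} are extracted by a standard argument: define the stable/unstable/bounded subspaces as the sets of initial conditions whose forward, backward, or two-sided orbits are bounded (or decay), show these are closed complementary subspaces using surjectivity plus finite dimension, verify invariance from \eqref{P}-type commutation, and then obtain the exponential rates from the uniform bound on the Green operator via a discrete Gronwall / geometric-series argument.

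The main obstacle I anticipate is the careful verification that surjectivity of the difference operator on $l^\infty$ is genuinely equivalent to the existence of the trichotomy in the noninvertible-looking formulation used in this paper — that is, matching the bookkeeping of the three projections $P_m^1,P_m^2,P_m^3$ (and in particular the invertibility requirement of $A_m|_{\Ker P_m^1}$, which here is automatic since all $A_m$ are invertible) with whatever normalization appears in \cite{AHO,P}. Finite-dimensionality is used crucially in two places: to guarantee that the relevant bounded-orbit subspaces are finite-dimensional hence closed and complemented, and to pass from boundedness of orbits to exponential decay (via, e.g., the uniform boundedness of the associated solution operators and a spectral-radius/submultiplicativity estimate). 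Invertibility of $A_m$ is needed so that the two-sided cocycle, and hence the subspace $\Ker P_m^1$ on which $\cA$ must be inverted, is well-defined. Once the dictionary between these formulations is pinned down, the proof reduces to citing the converse admissibility result; so I would structure the write-up as (i) linear rescaling of shadowing into $l^\infty$-admissibility of the nonhomogeneous equation, (ii) citation of the converse of Theorem~\ref{adm}-type results from \cite{AHO,P} under the finite-dimensional invertible hypothesis, (iii) a short remark reconciling the projection bookkeeping.
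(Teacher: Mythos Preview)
Your proposal is correct and follows essentially the same route as the paper's own proof: fix $\epsilon=1$, use the corresponding $\delta$ together with linearity to rescale an arbitrary $\mathbf z\in X_{l^\infty}$ into a $(\delta,l^\infty)$-pseudotrajectory, extract from the shadowing solution a bounded solution of the nonhomogeneous equation $w_{n+1}-A_nw_n=z_{n+1}$, and then invoke \cite[Proposition~1]{P} to conclude exponential trichotomy. The paper's argument is terser than yours (it simply cites \cite[Proposition~1]{P} for your step~(ii) without the projection-bookkeeping discussion you anticipate), and the invertibility of each $A_m$ is used precisely to build the two-sided pseudotrajectory $\mathbf y$ from a single initial value $y_0$.
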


\begin{proof}
Choose  $\delta>0$ that corresponds to $\varepsilon=1$ in the notion of $l^\infty$-shadowing. 
We will prove that for every $\mathbf z=(z_n)_{n\in \Z}\in X_{l^\infty}$ there exists $\mathbf w=(w_n)_{n\in \Z}\in X_{l^\infty}$ such that 
\begin{equation}\label{iu}
w_{n+1}-A_nw_n=z_{n+1}, \quad n\in \Z.
\end{equation}
Choose a sequence  $\mathbf y=(y_n)_{n\in \Z}\subset X$ (which is completely determined with $y_0$)  such that
\[
y_{n+1}=A_n y_n+\frac{\delta}{\lVert \mathbf z\rVert_{l^\infty}}z_{n+1}, \quad n\in \Z.
\]
Then, $\mathbf y$ is an $(\delta, l^\infty)$-pseudotrajectory. Hence, there exists a solution $\mathbf x=(x_n)_{n\in \Z}$  of~\eqref{ld} such that $\sup_{n\in \Z}\lVert x_n-y_n\rVert \le 1$.
Set
\[
w_n=\frac{\lVert \mathbf z\rVert_{l^\infty}}{\delta}(y_n-x_n)\quad \text{for $n\in \Z$.}
\]
Obviously, $\mathbf w=(w_n)_{n\in \Z}\in X_{l^\infty}$ and it is easy to verify that~\eqref{iu} holds.  The conclusion of the proposition now follows directly from~\cite[Proposition 1.]{P}.
\end{proof}

\begin{remark}
We observe that Proposition~\ref{257}  is false (in general) for infinite-dimensional dynamics even in the autonomous case when all $A_m$ coincide (see for  example~\cite[Remark 9(a)]{BCDMP18}).
\end{remark}

It turns out that in the case when $(A_m)_{m\in \Z}$ admits an exponential dichotomy, we can say more. We first recall (see Theorem~\ref{adm2}) that in that case, for each $\mathbf y\in X_B$,  $\mathbf x:=G\mathbf y$ is the  unique sequence in $X_B$ such that~\eqref{ae} holds. 
\begin{theorem} \label{theo: dichotomy uniqueness}
Assume that the sequence $(A_m)_{m\in \Z}$ admits an exponential dichotomy and that~\eqref{cG} holds. Then, \eqref{nnd} has an $B$-Lipschitz shadowing property.  Furthermore, trajectory that shadows each pseudotrajectory is unique. 
\end{theorem}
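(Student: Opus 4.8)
The plan is to read off the $B$-Lipschitz shadowing property directly from Theorem~\ref{cor: uniqueness}: an exponential dichotomy is the special case of an exponential trichotomy with $P_m^3=0$ for all $m$, the operator $G$ and the hypothesis \eqref{cG} are literally the same, so the existence statement and the choice $\delta=L\epsilon$ require no new work. The only genuinely new claim is the uniqueness of the shadowing trajectory, and for this I will show that the shadowing corrections are exactly the fixed points of the map $T=GS$ constructed in the proof of Theorem~\ref{cor: uniqueness}, and that $T$ is a global contraction of $X_B$.

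So fix $\epsilon>0$, let $\delta$ and the $(\delta,B)$-pseudotrajectory $\mathbf y=(y_n)_{n\in\Z}$ be as in that proof, and retain the maps $g_n(v)=f_n(y_n+v)-f_n(y_n)+F_n(y_n)-y_{n+1}$, $(S\mathbf x)_n=g_{n-1}(x_{n-1})$ and $T=GS$. The contraction bound $\lVert T(\mathbf x^1)-T(\mathbf x^2)\rVert_B\le c\lVert G\rVert\,\lVert \mathbf x^1-\mathbf x^2\rVert_B$ proved there uses only \eqref{fg}, hence holds for all $\mathbf x^1,\mathbf x^2\in X_B$; since $c\lVert G\rVert<1$, $T$ has at most one fixed point in $X_B$. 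Now suppose $\mathbf u$ and $\mathbf v$ are solutions of \eqref{nnd} that both shadow $\mathbf y$, i.e.\ $\mathbf u-\mathbf y,\mathbf v-\mathbf y\in X_B$, and set $\mathbf x^1:=\mathbf u-\mathbf y$, $\mathbf x^2:=\mathbf v-\mathbf y$. Expanding $u_n=A_{n-1}u_{n-1}+f_{n-1}(u_{n-1})$ and adding and subtracting $f_{n-1}(y_{n-1})$ gives, exactly as in the proof of Theorem~\ref{cor: uniqueness},
\[
x_n^i-A_{n-1}x_{n-1}^i=g_{n-1}(x_{n-1}^i)=(S\mathbf x^i)_n,\qquad n\in\Z,\ i=1,2,
\]
and \eqref{fg} together with \eqref{pseudo} gives $S\mathbf x^i\in X_B$. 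Thus each $\mathbf x^i\in X_B$ solves the linear equation \eqref{ae} with inhomogeneity $S\mathbf x^i$; by the \emph{uniqueness} assertion of Theorem~\ref{adm2} this forces $\mathbf x^i=G(S\mathbf x^i)=T(\mathbf x^i)$. Hence $\mathbf x^1$ and $\mathbf x^2$ are both fixed points of $T$, so $\mathbf x^1=\mathbf x^2$ and $\mathbf u=\mathbf v$.

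The single point where the dichotomy hypothesis is indispensable — and the one I expect to be the crux — is the use of the uniqueness part of Theorem~\ref{adm2} to pass from ``$\mathbf x^i$ solves \eqref{ae}'' to ``$\mathbf x^i$ is a fixed point of $T$''. Under a mere trichotomy, Theorem~\ref{adm} only asserts that $G(S\mathbf x^i)$ is \emph{one} element of $X_B$ satisfying \eqref{ae}; the homogeneous equation $x_{n+1}=A_nx_n$ can have nonzero solutions in $X_B$, namely those supported on the central subspaces $\Ima P_n^3$, so \eqref{ae} may have several solutions in $X_B$ and the identification of shadowing corrections with fixed points of $T$ breaks down — which is consistent with the fact that the shadowing trajectory is in general not unique in the trichotomy case. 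Every other ingredient of the argument is a verbatim repetition of the proof of Theorem~\ref{cor: uniqueness}.
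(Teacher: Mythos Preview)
Your proof is correct and rests on the same two ingredients as the paper's: the existence part is quoted from Theorem~\ref{cor: uniqueness}, and uniqueness comes from combining Theorem~\ref{adm2} with the contraction bound $c\lVert G\rVert<1$. The only organisational difference is that the paper subtracts the two shadowing equations and works directly with the difference $\mathbf z^1-\mathbf z^2$, obtaining $\lVert \mathbf z^1-\mathbf z^2\rVert_B\le c\lVert G\rVert\,\lVert \mathbf z^1-\mathbf z^2\rVert_B$ in one stroke, whereas you first identify each correction $\mathbf x^i$ as a fixed point of $T$ and then invoke uniqueness of fixed points of a global contraction; the substance is the same.
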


\begin{proof}
We will use the same notation as in the proof of Theorem~\ref{cor: uniqueness}.
In a view of  Theorem~\ref{cor: uniqueness}, we only need to establish the  uniqueness part.  Let $\mathbf y$ be an $(\delta, B)$-pseudotrajectory for~\eqref{nnd} and assume that $\mathbf z^1=(z_n^1)_{n\in \Z}, \mathbf z^2=(z_n^2)_{n\in \Z}$ are trajectories of~\eqref{nnd} such that
\[
\lVert \mathbf z^i-\mathbf y\rVert_B \le \epsilon \quad \text{for $i=1, 2$.}
\]
Then, 
\[
z_n^i-y_n=A_{n-1}(z_{n-1}^i-y_{n-1})+g_{n-1}(z_{n-1}^i-y_{n-1}),
\]
for $n\in \Z$ and $i\in \{1, 2\}$. Consequently, 
\begin{equation}\label{bs}
z_n^1-z_n^2=A_{n-1}(z_{n-1}^1-z_{n-1}^2)+w_n, 
\end{equation}
where
\[
w_n:=g_{n-1}(z_{n-1}^1-y_{n-1})-g_{n-1}(z_{n-1}^2-y_{n-1}), \quad n\in \Z.
\]
Let $\mathbf w=(w_n)_{n\in \Z}$. It follows from~\eqref{fg} that 
\[
\lVert \mathbf w\rVert_B \le c\lVert \mathbf z^1-\mathbf z^2\rVert_B.
\]
On the other hand, \eqref{bs} implies that 
\[
\lVert \mathbf z^1-\mathbf z^2\rVert_B\le \lVert G\rVert \cdot  \lVert \mathbf w\rVert_B.
\]
By combining the last two inequalities, we conclude that
\[
\lVert \mathbf z^1-\mathbf z^2\rVert_B \le c \lVert G\rVert \cdot \lVert \mathbf z^1-\mathbf z^2\rVert_B.
\]
By~\eqref{cG}, we conclude that $\lVert \mathbf z^1-\mathbf z^2\rVert_B=0$ and thus $\mathbf z^1=\mathbf z^2$. The proof of the theorem is completed. 
\end{proof}

\begin{remark} \label{rem: unified app}
Observe that our unified approach gives us all the usual types of shadowing simply by considering different types of admissible Banach sequence spaces $B$. For instance, for $B=l^\infty$ we get the usual notion of Lipschitz shadowing. For $B=l^p$ as in Example \ref{ex3} we get the notion of $l^p$-shadowing and so on.
\end{remark}

\section{One-sided dynamics}
Let us now consider the case of one-sided dynamics on $\Z_0^+$. We stress that the dynamics on $\Z_0^-$ can be treated analogously.

For $\mathbf x=(x_n)_{n\ge 0}\subset X$, we define $\bar{\mathbf x}=(\bar{x}_n)_{n\in \Z}\subset X$  by
\[
\bar{x}_n:=\begin{cases}
x_n & \text{if $n\ge 0$;}\\
0 & \text{if $n<0$.}
\end{cases}
\]
For an admissible Banach sequence space $B$, let 
\[
X_B^+:= \bigg{\{} \mathbf x=(x_n)_{n\ge 0}\subset X: \bar{\mathbf x} \in X_B \bigg{\}}.
\]
Then, $X_B^+$ is the Banach space with respect to the norm $\lVert \mathbf x\rVert_B^+ :=\lVert \bar{\mathbf x}\rVert_B$.

Assume that $(A_m)_{m\ge 0}$ is a sequence of bounded linear operators on $X$ and let $f_n \colon X\to X$, $n\ge 0$ be the sequence of maps such that~\eqref{fg} holds for $n\ge 0$ (and with some $c>0$). 
We consider the associated nonlinear dynamics
\begin{equation}\label{nd2}
x_{n+1}=F_n(x_n) \quad n\ge 0,
\end{equation}
where $F_n:=A_n+f_n$.
Given $\delta >0$, the sequence $(y_n)_{n\ge 0} \subset X$ is said to be an $(\delta, B)$-\emph{pseudotrajectory} for~\eqref{nd2} if  
$(y_{n+1}-F_n(y_n))_{n\ge 0}\in X_B^+$ and 
\begin{equation}\label{pseudo2}
\lVert (y_{n+1}-F_n(y_n))_{n\ge 0} \rVert_B^+ \le \delta. 
\end{equation}
We say that~\eqref{nd2} has an \emph{$B$-shadowing property} if for every $\varepsilon>0$ there exists $\delta >0$ so that for every $(\delta, B)$-pseudotrajectory $(y_n)_{n\ge 0}$, there exists a sequence $(x_n)_{n\ge 0}$ satisfying \eqref{nd2}  and such that 
$(x_n-y_n)_{n\ge 0} \in X_B^+$ together with
\begin{equation}\label{wv2}\lVert (x_n-y_n)_{n\ge 0}\rVert_B^+ \le \varepsilon.\end{equation}
\begin{theorem}\label{248}
Assume that $(A_m)_{m\ge 0}$ admits an exponential dichotomy and let $B$ be an admissible Banach sequence space. Then,  if  $c>0$ is sufficiently small~\eqref{nd2} has a $B$-shadowing property.
\end{theorem}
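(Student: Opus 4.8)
The plan is to reduce the one-sided problem on $\Z_0^+$ to the two-sided situation already handled by Theorem~\ref{cor: uniqueness}, by extending an exponential dichotomy on $\Z_0^+$ to an exponential dichotomy on all of $\Z$. First I would recall the standard fact (essentially Henry~\cite{Henry2}) that a sequence $(A_m)_{m\ge 0}$ admitting an exponential dichotomy on $\Z_0^+$ can be extended to a sequence $(\tilde A_m)_{m\in \Z}$ admitting an exponential dichotomy on $\Z$: for $m<0$ one simply sets $\tilde A_m=\Id$ (or any fixed invertible contraction on the stable part and expansion on the unstable part compatible with the splitting $X=\Ima P_0\oplus \Ker P_0$), and the dichotomy constants $C,\lambda$ of the extension can be controlled in terms of those of the original dichotomy. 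Let $B$ be the given admissible Banach sequence space and let $G\colon X_B\to X_B$ be the bounded operator associated to $(\tilde A_m)_{m\in \Z}$ by Theorem~\ref{adm2}; note $\lVert G\rVert\le 2C\frac{1+e^{-\lambda}}{1-e^{-\lambda}}$ by~\eqref{7:55}, so $c\lVert G\rVert<1$ holds whenever $c$ is sufficiently small (depending only on $C,\lambda$).

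Next I would also extend the nonlinear maps, setting $\tilde f_m=f_m$ for $m\ge 0$ and $\tilde f_m=0$ for $m<0$; these still satisfy~\eqref{fg} with the same constant $c$. Given $\varepsilon>0$ and a $(\delta,B)$-pseudotrajectory $(y_n)_{n\ge 0}$ for~\eqref{nd2}, I would build a two-sided pseudotrajectory $(\hat y_n)_{n\in \Z}$ for $\hat x_{n+1}=\tilde A_n\hat x_n+\tilde f_n(\hat x_n)$ by putting $\hat y_n=y_n$ for $n\ge 0$ and defining $\hat y_n$ for $n<0$ by iterating the exact linear dynamics backwards, i.e.\ choosing $\hat y_n$ so that $\hat y_{n+1}=\tilde A_n\hat y_n$ for $n<0$ (possible since the $\tilde A_m$, $m<0$, are invertible). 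Then the residual sequence $(\hat y_{n+1}-\tilde F_n(\hat y_n))_{n\in \Z}$ is supported on $n\ge 0$ and agrees there with the original residual, so its $X_B$-norm is $\le\delta$; thus $(\hat y_n)_{n\in \Z}$ is a two-sided $(\delta,B)$-pseudotrajectory. Applying Theorem~\ref{cor: uniqueness} (with $\delta=\varepsilon/K$, $K=\lVert G\rVert/(1-c\lVert G\rVert)$) yields a true trajectory $(\hat x_n)_{n\in \Z}$ of the extended system with $\lVert(\hat x_n-\hat y_n)_{n\in \Z}\rVert_B\le\varepsilon$. Restricting to $n\ge 0$ gives a trajectory $(x_n)_{n\ge 0}=(\hat x_n)_{n\ge 0}$ of~\eqref{nd2} with $\lVert(x_n-y_n)_{n\ge 0}\rVert_B^+\le\varepsilon$, which is exactly the $B$-shadowing property.

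The one delicate point — and the step I expect to require the most care — is the extension of the dichotomy: one must verify that the extended sequence $(\tilde A_m)_{m\in \Z}$ genuinely satisfies all of conditions (1)--(3) in the definition of exponential dichotomy on $\Z$ (invariant projections, invertibility on the kernels, and the two exponential estimates~\eqref{ED1}--\eqref{ED2}), with explicit uniform constants. Taking $\tilde A_m=\Id$ for $m<0$ forces the stable part to be merely nonexpanding rather than exponentially contracting on the negative half-line, so instead one should use $\tilde A_m|_{\Ima P_0}$ a fixed contraction with rate $e^{-\lambda}$ and $\tilde A_m|_{\Ker P_0}$ its formal inverse on the unstable part; then the estimates~\eqref{ED1}--\eqref{ED2} propagate to all of $\Z$ with constants comparable to the original $C$ and the same $\lambda$. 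Everything else is a routine bookkeeping exercise, and the smallness threshold on $c$ is precisely $0<c<\frac{1-e^{-\lambda}}{2C(1+e^{-\lambda})}$ for the extended constants.
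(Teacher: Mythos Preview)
Your proposal is correct and follows essentially the same route as the paper's own proof: extend the dichotomy to $\Z$ by taking $A_m$ for $m<0$ to be a fixed hyperbolic operator whose unstable subspace is $\Ker P_0$, extend the nonlinearities by zero, lift a one-sided $(\delta,B)$-pseudotrajectory to a two-sided one by backward iteration of the linear dynamics for $n<0$, apply Theorem~\ref{cor: uniqueness}, and restrict the resulting trajectory to $n\ge 0$. The paper disposes of the ``delicate point'' you flag---that the extended sequence really has an exponential dichotomy on $\Z$---by citing \cite[Lemma~1]{CP} rather than verifying it by hand, but your sketched verification (contraction $e^{-\lambda}\Id$ on $\Ima P_0$, expansion $e^{\lambda}\Id$ on $\Ker P_0$) is exactly the right construction and your self-correction away from $\tilde A_m=\Id$ is on target.
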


\begin{proof}
We extend the sequence $(A_m)_{m\ge 0}$ to a sequence over $\Z$ in the following manner: choose an invertible, hyperbolic linear operator $A$ on $X$ such that $\Ker P_0$ coincides with the unstable subspace of $A$ and let $A_m:=A$ for $m<0$. Then (see~\cite[Lemma 1.]{CP}), $(A_m)_{m\in \Z}$ admits an exponential dichotomy. Consider $G$  as in Theorems~\ref{adm} and~\ref{adm2} and 
let $c>0$ be such that $c\lVert G\rVert<1$. 
 Finally, set 
$f_n=0$ for $n<0$ and consider the nonlinear system
\begin{equation}\label{nd3}
x_{n+1}=F_n(x_n) \quad n\in \Z,
\end{equation}
where $F_n=A_n+f_n=A$ for $n<0$.  Take an arbitrary $\varepsilon>0$, define $K$ as in~\eqref{k} and let $\delta:=\frac{\epsilon}{K}>0$. Furthermore, choose  an $(\delta, B)$-\emph{pseudotrajectory} $\mathbf y=(y_n)_{n\ge 0}$  for~\eqref{nd2}.
We consider $\hat{\mathbf y}=(\hat{y}_n)_{n\in \Z}\subset X$ defined by
\[
\hat{y}_n:=\begin{cases}
y_n & \text{if $n\ge 0$;} \\
A^n y_0 & \text{if $n<0$.}
\end{cases}
\]
Clearly, $\hat{\mathbf y}$ is an $(\delta, B)$-\emph{pseudotrajectory} for~\eqref{nd3}. Hence, it follows from the proof of Theorem~\ref{cor: uniqueness} that there exists a sequence $\mathbf x=(x_n)_{n\in \Z}\subset X$ that solves~\eqref{nd3} and such that
$\lVert \mathbf x-\hat{\mathbf y}\rVert_B \le \epsilon$. Then, $\mathbf z=(x_n)_{n\ge 0}$ is a solution of~\eqref{nd2} such that $\lVert \mathbf z-\mathbf y\rVert_B^+\le \epsilon$ and the proof is complete. 
\end{proof}

As in the case of two-sided dynamics, our results in particular apply to the case of linear dynamics
\begin{equation}\label{ld2}
x_{n+1}=A_n x_n, \quad n\ge 0.
\end{equation}

\begin{corollary}\label{957}
Assume that $(A_m)_{m\ge 0}$ admits an exponential dichotomy and let $B$ be an admissible Banach sequence space. Then, \eqref{ld2} has an $B$-Lipschitz shadowing property. 
\end{corollary}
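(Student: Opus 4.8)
The plan is to deduce Corollary~\ref{957} from Theorem~\ref{248} by verifying that, in the linear case $f_n = 0$ for all $n \ge 0$, the $\delta$ produced in the proof of Theorem~\ref{248} depends linearly on $\varepsilon$. First I would recall that when $f_n = 0$ the smallness hypothesis ``$c > 0$ sufficiently small'' in Theorem~\ref{248} is vacuous (one may take $c = 0$), so the theorem applies unconditionally and already yields the $B$-shadowing property; the only thing left is to track the constants.

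Next I would reinspect the construction inside the proof of Theorem~\ref{248}. There the one-sided system is extended to a two-sided system over $\Z$ (using an invertible hyperbolic $A$ with $\Ker P_0$ equal to its unstable subspace, via \cite[Lemma 1.]{CP}), which again admits an exponential dichotomy, and then the proof of Theorem~\ref{cor: uniqueness} is invoked. In that proof one sets $K := \lVert G\rVert/(1 - c\lVert G\rVert)$ as in~\eqref{k} and $\delta := \varepsilon/K$, and every $(\delta,B)$-pseudotrajectory is shadowed within $\varepsilon$. With $c = 0$ this gives $K = \lVert G\rVert$ and $\delta = \varepsilon/\lVert G\rVert$, so $\delta = L\varepsilon$ with $L := 1/\lVert G\rVert > 0$ independent of $\varepsilon$. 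Since the passage from the one-sided pseudotrajectory $\mathbf y = (y_n)_{n\ge 0}$ to its extension $\hat{\mathbf y}$ preserves the norm of the error sequence (the extension is a genuine $A$-orbit for $n < 0$, so it contributes nothing to $\lVert(y_{n+1}-F_n(y_n))_{n}\rVert_B$), the $(\delta,B)$-pseudotrajectory property of $\hat{\mathbf y}$ holds with the same $\delta$, and the restriction of the shadowing orbit back to $n \ge 0$ satisfies $\lVert \mathbf z - \mathbf y\rVert_B^+ \le \varepsilon$. Hence the same $\delta = L\varepsilon$ works for~\eqref{ld2}, which is exactly the $B$-Lipschitz shadowing property.

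Concretely, the proof I would write is short: fix an admissible $B$, apply the construction of Theorem~\ref{248} with $f_n = 0$, observe that $c\lVert G\rVert = 0 < 1$ so Theorems~\ref{adm} and~\ref{adm2} and the argument of Theorem~\ref{cor: uniqueness} apply, and note that the resulting dependence $\delta = \varepsilon/\lVert G\rVert$ is linear in $\varepsilon$; this upgrades ``$B$-shadowing'' to ``$B$-Lipschitz shadowing.'' One could alternatively bypass Theorem~\ref{248} entirely and argue directly: extend $(A_m)_{m \ge 0}$ over $\Z$ as above, apply Corollary~1 (the two-sided linear $B$-Lipschitz shadowing corollary stated after Theorem~\ref{theo: dichotomy uniqueness}) to the extended sequence, and restrict back; both routes give the same constant.

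The main obstacle — really the only delicate point — is making sure the norm-preservation in the extension step is genuine in an arbitrary admissible $B$, not just in $l^\infty$. This is where admissibility condition~(2) (shift-invariance of the norm) and the solidity of the norm are used: padding a sequence with zeros for $n < 0$ does not increase its $B$-norm, and an exact $A$-orbit on the negative side contributes a zero error term there. Once that is noted, the linear dependence of $\delta$ on $\varepsilon$ is immediate from~\eqref{k} with $c = 0$, and nothing further is needed.
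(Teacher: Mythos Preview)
Your proposal is correct and follows essentially the same route as the paper: the paper's proof is the single sentence ``The desired conclusion follows directly from Theorem~\ref{248} applied to the case when $f_n=0$ for $n\ge 0$.'' Your additional care in tracking the constants to verify the \emph{Lipschitz} dependence (since Theorem~\ref{248} as stated only claims $B$-shadowing) is warranted and accurate; the linear relation $\delta=\varepsilon/K$ with $K=\lVert G\rVert$ when $c=0$ is exactly what underlies the paper's implicit claim.
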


\begin{proof}
The desired conclusion follows directly from Theorem~\ref{248} applied to the case when $f_n=0$ for $n\ge 0$.
\end{proof}
We also have the following partial converse to Corollary~\ref{957}. 
\begin{proposition}\label{prop: converse one sided}
Assume that $X$ is finite-dimensional and that~\eqref{ld2} has an $l^\infty$-Lipschitz shadowing property. Then, $(A_m)_{m\ge 0}$ admits an exponential dichotomy. 
\end{proposition}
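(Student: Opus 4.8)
The plan is to imitate the proof of Proposition~\ref{257}: first deduce from $l^\infty$-Lipschitz shadowing a Perron-type admissibility property for the one-sided linear equation, and then invoke the classical characterization of exponential dichotomy on $\Z_0^+$ in terms of that admissibility.

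\emph{Step 1 (admissibility).} By the $l^\infty$-Lipschitz shadowing property of~\eqref{ld2} there is a constant $L>0$ such that every $(\delta,l^\infty)$-pseudotrajectory of~\eqref{ld2} is shadowed by some solution of~\eqref{ld2} at $\lVert\cdot\rVert_{l^\infty}^+$-distance at most $L\delta$. I would use this to show that for every $\mathbf z=(z_n)_{n\ge 0}\in X_{l^\infty}^+$ there exists $\mathbf w=(w_n)_{n\ge 0}\in X_{l^\infty}^+$ with
\[
w_{n+1}-A_n w_n=z_{n+1}\quad (n\ge 0)\qquad\text{and}\qquad \lVert \mathbf w\rVert_{l^\infty}^+\le L\,\lVert \mathbf z\rVert_{l^\infty}^+ .
\]
Indeed, for $\mathbf z\neq\mathbf 0$ put $y_0:=0$ and $y_{n+1}:=A_n y_n+z_{n+1}$ for $n\ge 0$; then $(y_{n+1}-F_n(y_n))_{n\ge 0}=(z_{n+1})_{n\ge 0}$, so $\mathbf y=(y_n)_{n\ge 0}$ is a $(\delta,l^\infty)$-pseudotrajectory for~\eqref{ld2} with $\delta:=\lVert\mathbf z\rVert_{l^\infty}^+$. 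Choosing a solution $\mathbf x=(x_n)_{n\ge 0}$ of~\eqref{ld2} with $\lVert\mathbf x-\mathbf y\rVert_{l^\infty}^+\le L\delta$ and setting $\mathbf w:=\mathbf y-\mathbf x$, we obtain $w_{n+1}-A_n w_n=(y_{n+1}-A_n y_n)-(x_{n+1}-A_n x_n)=z_{n+1}$ together with $\lVert\mathbf w\rVert_{l^\infty}^+\le L\lVert\mathbf z\rVert_{l^\infty}^+$; the case $\mathbf z=\mathbf 0$ is trivial.

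\emph{Step 2 (dichotomy).} It then remains to invoke the Perron-type theorem characterizing exponential dichotomy on the half-line through the admissibility of the pair $(l^\infty,l^\infty)$ (see, e.g.,~\cite{Sasu} and the references therein): since $X$ is finite-dimensional, the stable subspace $\{\xi\in X:\sup_{n\ge 0}\lVert\cA(n,0)\xi\rVert<\infty\}$ of~\eqref{ld2} automatically splits off in $X$, and combined with the admissibility property from Step~1 this produces an exponential dichotomy for $(A_m)_{m\ge 0}$.

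The genuinely nonroutine point is Step~2: on the half-line, admissibility alone does not force a dichotomy, and one needs the stable subspace to be complemented — which is exactly where finite-dimensionality enters (in infinite dimensions the statement fails, already in the autonomous case). Observe also that, in contrast to Proposition~\ref{257}, no invertibility of the $A_m$ is assumed here, since the half-line admissibility theorem itself yields the dynamics on the unstable part together with the invertibility of $A_m\rvert_{\Ker P_m}$ onto $\Ker P_{m+1}$. Finally, one should keep in mind that the shadowing solution $\mathbf x$, and hence $\mathbf w$, need not be canonical (bounded homogeneous solutions of~\eqref{ld2} may a priori exist), which is why the uniform bound $\lVert\mathbf w\rVert_{l^\infty}^+\le L\lVert\mathbf z\rVert_{l^\infty}^+$ is read off directly from the Lipschitz hypothesis rather than obtained via an open-mapping argument.
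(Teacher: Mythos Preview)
Your proposal is correct and follows essentially the same route as the paper: establish $(l^\infty,l^\infty)$-admissibility from shadowing (as in Proposition~\ref{257}), then invoke a half-line Perron-type characterization of exponential dichotomy. The paper cites \cite[Theorem~3.2]{HM} for the half-line admissibility criterion rather than \cite{Sasu}; your remark that finite-dimensionality is what guarantees the complementability needed for that criterion is exactly the point, and your observation that the Lipschitz bound delivers $\lVert\mathbf w\rVert_{l^\infty}^+\le L\lVert\mathbf z\rVert_{l^\infty}^+$ directly is a correct (if slightly stronger than necessary) reading of the hypothesis.
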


\begin{proof}
By proceeding as in the proof of Proposition~\ref{257}, one can show that for every $\mathbf z=(z_n)_{n\ge 0}\in X_B^+$ such that $z_0=0$, there exists $\mathbf w=(w_n)_{n\ge 0}\in X_B^+$ satisfying
\[
w_{n+1}-A_n w_n=z_{n+1} \quad \text{for $n\ge 0$.}
\]
Hence, \cite[Theorem 3.2.]{HM} implies that $(A_m)_{m\ge 0}$ admits an exponential dichotomy. 
\end{proof}

\section{A case of continuous time}
In this section we will apply our previous results in order to develop shadowing theory for continuous time dynamics. 

For the sake of simplicity, in this section we  will study only the classical $l^\infty$-shadowing.  We consider a nonlinear differential equation
\begin{equation}\label{ndc}
x'=A(t)x+f(t,x),
\end{equation}
where $A$ is a continuous map from $\R$ to the space of all bounded linear operators  on $X$ satisfying
\[
N:=\sup_{t\in \R} \lVert A(t)\rVert<\infty,
\]
 and $f\colon \R \times X\to X$ is a continuous map. We assume that $f(\cdot,0)=0$  and that there exists $c>0$ such that
\begin{equation}\label{fgc}
\lVert f(t,x)-f(t,y)\rVert \le c\lVert x-y\rVert \quad \text{for $t\in \R$ and $x, y\in X$.}
\end{equation}
 We consider the associated linear equation
\begin{equation}\label{ldc}
x'=A(t)x.
\end{equation}
Let $T(t,s)$ be the (linear) evolution family associated to~\eqref{ldc}. We will suppose that it admits an exponential trichotomy, i.e. that there exists a family of projections $P^i(s)$, $s\in \R$, $i\in \{1, 2, 3\}$ on $X$ and $C, \lambda >0$ such that:
\begin{enumerate}
\item for $s\in \R$, $P^1(s)+P^2(s)+P^3(s)=\Id$;
\item for $s\in \R$, $i, j\in \{1,2,3\}$, $i\neq j$ we have that $P^i(s)P^j(s)=0$;
\item for $t,s\in \R$ and $i\in \{1,2,3\}$,
\begin{equation}\label{proc}
T(t,s)P^i(s)=P^i(t)T(t,s);
\end{equation}
\item for $t\ge s$,
\begin{equation}\label{ed1c}
\lVert T(t,s)P^1(s)\rVert \le Ce^{-\lambda (t-s)};
\end{equation}
\item for $t\le s$,
\begin{equation}\label{ed2c}
\lVert T(t,s)P^2(s)\rVert \le Ce^{-\lambda (s-t)};
\end{equation}
\item 
\begin{equation}\label{ed3c}
\lVert T(t,s)P^3(s)\rVert \le Ce^{-\lambda (t-s)} \quad \text{for $t\ge s$,}
\end{equation}
and
\begin{equation}\label{ed4c}
\lVert T(t,s)P^3(s)\rVert \le Ce^{-\lambda (s-t)} \quad \text{for $t\le s$.}
\end{equation}
\end{enumerate}
Since we assumed that $N<\infty$,  it is easy to show using Gronwall's lemma that there exist $D, b>0$ such that
\begin{equation}\label{ubc}
\lVert T(t,s)\rVert \le De^{b(t-s)} \quad t\ge s.
\end{equation}
Recall that the nonlinear evolution family associated with~\eqref{ndc} is given by
\begin{equation}\label{U}
U(t,s)x=T(t,s)x+\int_s^t T(t,\tau)f(\tau, U(\tau, s)x)\, d\tau,
\end{equation}
for $x\in X$ and $t,s\in \R$. By applying Gronwall's lemma, it is easy to prove that~\eqref{fgc} and~\eqref{ubc} imply that there exist $K, a>0$ such that
\begin{equation}\label{ubc1}
\lVert U(t,s)\rVert \le Ke^{a(t-s)} \quad \text{for $t\ge s$.}
\end{equation}

We now introduce the concept of shadowing in this setting. Let $\delta >0$. A differentiable function $y\colon \R \to X$ is said to be a \emph{$\delta$-pseudotrajectory} for~\eqref{ndc} if 
\[
\sup_{t\in \R} \lVert y'(t)-A(t)y(t)-f(t, y(t))\rVert \le \delta. 
\]
We say that~\eqref{ndc} has the \emph{shadowing property} if for every $\epsilon >0$ there exists $\delta >0$ such that for every $\delta$-pseudotrajectory $y\colon \R \to X$, there exists a solution $x\colon \R \to X$ of~\eqref{ndc} satisfying
\[
\sup_{t\in \R} \lVert x(t)-y(t)\rVert \le \epsilon.
\]
Moreover, if there exists $L>0$ such that $\delta$ can be chosen as $\delta=L\epsilon$, we say that~\eqref{ndc} has the \emph{Lipschitz shadowing property}.
We now formulate and prove the main result of this section. 
\begin{theorem}
If $c>0$ is sufficiently small, then \eqref{ndc} has the Lipschitz shadowing property. 
\end{theorem}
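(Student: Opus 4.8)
The plan is to discretize the continuous-time problem and reduce it to the discrete case already handled in Theorem~\ref{cor: uniqueness}. First I would sample the dynamics at integer times: given a $\delta$-pseudotrajectory $y\colon \R\to X$ for~\eqref{ndc}, I would set $A_m:=T(m+1,m)$ and $y_m:=y(m)$ for $m\in\Z$. Using the variation-of-constants formula over the unit interval $[m,m+1]$, one writes $y(m+1)=T(m+1,m)y(m)+\int_m^{m+1}T(m+1,\tau)\big(y'(\tau)-A(\tau)y(\tau)\big)\,d\tau+\int_m^{m+1}T(m+1,\tau)f(\tau,y(\tau))\,d\tau$, so that $y(m+1)-U(m+1,m)y(m)$ is a sum of a term controlled by $\delta$ (via~\eqref{ubc}, the integral of the pseudotrajectory defect is $\le De^{b}\delta$) plus an error measuring the failure of $U$ to transport $y(m)$ to $y(m+1)$ along the true nonlinear flow; this error is itself $O(\delta)$ by a Gronwall estimate on $[m,m+1]$. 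Thus $(y_m)_{m\in\Z}$ is an $(L'\delta,l^\infty)$-pseudotrajectory for the discrete system $x_{m+1}=F_m(x_m)$ with $F_m:=U(m+1,m)$.

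Next I would check that this discrete system fits the hypotheses of Theorem~\ref{cor: uniqueness}. The linear part $(A_m)_{m\in\Z}=(T(m+1,m))_{m\in\Z}$ admits an exponential trichotomy on $\Z$ with projections $P^i_m:=P^i(m)$: this follows directly from~\eqref{proc} together with~\eqref{ed1c}--\eqref{ed4c}, since $\mathcal A(m,n)=T(m,n)$ for the discrete cocycle. The nonlinear part is $f_m:=U(m+1,m)-T(m+1,m)$; from~\eqref{U} and~\eqref{fgc}, $\lVert f_m(x)-f_m(y)\rVert\le\int_m^{m+1}\lVert T(m+1,\tau)\rVert\,c\,\lVert U(\tau,m)x-U(\tau,m)y\rVert\,d\tau$, and invoking~\eqref{ubc1} and~\eqref{ubc} one gets a Lipschitz constant $c'\le De^{b}Ke^{a}c$, which tends to $0$ as $c\to 0$. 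Hence for $c$ small enough, $c'\lVert G\rVert<1$ where $G$ is the operator from Theorem~\ref{adm} for $B=l^\infty$, and Theorem~\ref{cor: uniqueness} yields a discrete true orbit $(x_m)_{m\in\Z}$ with $x_{m+1}=U(m+1,m)x_m$ and $\sup_m\lVert x_m-y_m\rVert\le K(c')\cdot L'\delta$.

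Finally I would interpolate: define $x(t):=U(t,m)x_m$ for $t\in[m,m+1)$. Since $x_{m+1}=U(m+1,m)x_m$, the pieces match at integer endpoints and $x$ is a genuine global solution of~\eqref{ndc}. It remains to bound $\sup_{t\in\R}\lVert x(t)-y(t)\rVert$. For $t\in[m,m+1]$, write $x(t)-y(t)=\big(U(t,m)x_m-U(t,m)y_m\big)+\big(U(t,m)y_m-y(t)\big)$; the first difference is $\le Ke^{a}\lVert x_m-y_m\rVert$ by~\eqref{ubc1}, and the second is the same $O(\delta)$ Gronwall error on $[m,m+1]$ already encountered. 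Combining, $\sup_{t\in\R}\lVert x(t)-y(t)\rVert\le \text{const}\cdot\delta$, and choosing $\delta=\epsilon/\text{const}$ gives the Lipschitz shadowing property.

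The main obstacle I anticipate is the bookkeeping of the unit-interval Gronwall estimates: one must show cleanly that both the ``defect accumulated along $[m,m+1]$'' and the ``discrepancy between the nonlinear evolution and the true pseudotrajectory on $[m,m+1]$'' are genuinely proportional to $\delta$ with constants uniform in $m$, depending only on $C,\lambda,N,c$ (equivalently on $D,b,K,a$). This is where the uniform bound $N<\infty$ and the resulting estimates~\eqref{ubc} and~\eqref{ubc1} are essential; everything else is a direct transcription of the discrete theorem. One should also verify that the smallness of $c$ required is a single explicit threshold making simultaneously $c'\lVert G\rVert<1$, which is immediate since $c'\le De^{b}Ke^{a}c$ and $\lVert G\rVert\le 2C\frac{1+e^{-\lambda}}{1-e^{-\lambda}}$ by~\eqref{7:55}.
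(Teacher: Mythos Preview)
Your proposal is correct and follows essentially the same route as the paper: discretize at integer times via $A_n=T(n+1,n)$ and $F_n=U(n+1,n)$, verify the discrete exponential trichotomy and the Lipschitz bound $\lVert f_n(x)-f_n(y)\rVert\le cDKe^{a+b}\lVert x-y\rVert$ on $f_n:=F_n-A_n$, apply Theorem~\ref{cor: uniqueness} with $B=l^\infty$, and interpolate back using a Gronwall estimate on each unit interval. The only differences are cosmetic---the paper runs the final Gronwall directly on $\lVert x(t)-y(t)\rVert$ (obtaining the constant $(1+e^{N+c}/L)e^{N+c}$) rather than splitting through $U(t,m)y_m$ as you do---and note that your displayed variation-of-constants identity has a stray extra $f$-integral: the first integrand should be the defect $y'(\tau)-A(\tau)y(\tau)-f(\tau,y(\tau))$, not $y'(\tau)-A(\tau)y(\tau)$, for the decomposition you describe to make sense.
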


\begin{proof}
Set
\[
A_n:=T(n+1, n), \quad \text{for $n\in \Z$.}
\]
It follows readily from~\eqref{proc}, \eqref{ed1c}, \eqref{ed2c}, \eqref{ed3c} and~\eqref{ed4c}  that $(A_m)_{m\in \Z}$ admits an exponential trichotomy with projections $P_n^i=P^i(n)$, $n\in \Z$, $i\in \{1, 2, 3\}$.
Furthermore, set 
\[
f_n(x)=\int_n^{n+1} T(n+1, \tau)f(\tau, U(\tau, n)x)\, d\tau, \quad \text{for $x\in X$ and $n\in \Z$.}
\]
It follows from~\eqref{fgc}, \eqref{ubc} and~\eqref{ubc1} that
\[
\begin{split}
\lVert f_n(x)-f_n(y)\rVert &\le \int_n^{n+1}\lVert T(n+1, \tau)\rVert \cdot \lVert f(\tau, U(\tau, n)x)-f(\tau, U(\tau, n)y)\rVert \, d\tau \\
&\le c\int_n^{n+1}\lVert T(n+1, \tau)\rVert \cdot \lVert U(\tau, n)(x-y)\rVert\, d\tau \\
&\le c DK e^{a+b}\lVert x-y\rVert, 
\end{split}
\]
an thus there exists $c'>0$ such that
\begin{equation}\label{625}
\lVert f_n(x)-f_n(y)\rVert \le cc' \lVert x-y\rVert \quad \text{for $n\in \Z$ and $x, y\in X$.}
\end{equation}
Set $F_n:=A_n+f_n$ and consider the system
\begin{equation}\label{811}
x_{n+1}=F_n(x_n), \quad n\in \Z. 
\end{equation}
 Observe that it follows from~\eqref{U} that $F_n=U(n+1, n)$ for each $n\in \Z$. 

Since $(A_m)_{m\in \Z}$ admits an exponential trichotomy, it follows from Theorem~\ref{cor: uniqueness} and~\eqref{625} that for sufficiently small $c$, \eqref{811} has the $l^\infty$-Lipschitz shadowing. Let $L>0$ be the constant as in the definition 
of Lipschitz shadowing related to~\eqref{811}. Take $\epsilon >0$ and let 
$\delta:=L'\epsilon$, where
\[
L':=\frac{1}{\bigg{(}1+\frac{e^{N+c}}{L}\bigg{)} e^{N+c}}.
\]
 Furthermore, let $y$ be the $\delta$-pseudotrajectory for~\eqref{ndc}. Then, 
\[
y'=A(t)y+f(t,y)+h,
\]
for some $h\colon \R \to X$ such that $\lVert h(t)\rVert \le \delta$ for $t\in \R$. Take $n\in \Z$ and let $z$ be the solution of~\eqref{ndc} such that $z(n)=y(n)$. Then, for all $t\in [n, n+1]$ we have (see~\eqref{fgc}) that 
\[
\begin{split}
& \lVert y(t)-z(t)\rVert \\
&\le \bigg{\lVert} \int_n^t (A(s)(y(s)-z(s))+f(s, y(s))-f(s, z(s))+h(s))\, ds \bigg{\rVert} \\
&\le \delta+(N+c)\int_n^t \lVert y(s)-z(s)\rVert\, ds.
\end{split}
\]
Hence, it follows from Gronwall's lemma that
\[
\lVert y(t)-z(t)\rVert  \le \delta e^{N+c}, \quad \text{for $t\in [n, n+1]$.}
\]
In particular, 
\[
\lVert y(n+1)-F_n (y(n))\rVert =\lVert y(n+1)-z(n+1)\rVert \le \delta e^{N+c},
\]
for every $n\in \Z$. Hence, the sequence $(y_n)_{n\in \Z}\subset X$ defined by $y_n:=y(n)$ is an ($\delta e^{N+c}$, $l^\infty)$-pseudotrajectory for~\eqref{811}. Hence, there exists a solution $(x_n)_{n\in \Z}$ of~\eqref{811} such that
$\sup_{n\in \Z}\lVert x_n-y_n\rVert \le \frac{\delta e^{N+c}}{L}$. We define $x\colon \R \to X$ by 
\[
x(t)=U(t,n)x_n \quad \text{$n\in \Z$, $t\in [n, n+1)$.}
\]
Then, $x$ is a solution of~\eqref{ndc}. Finally, observe that that for $n\in \Z$ and $t\in [n, n+1)$ we have  that
\[
\begin{split}
& \lVert x(t)-y(t)\rVert \le \lVert x_n-y_n\rVert \\
&+ \bigg{\lVert} \int_n^t (A(s)(x(s)-y(s))+f(s, x(s))-f(s, y(s))-h(s))\, ds \bigg{\rVert} \\
&\le \delta \bigg{(}1+\frac{e^{N+c}}{L}\bigg{)}+(N+c) \int_t^n \lVert x(s)-y(s)\rVert\, ds.
\end{split}
\]
Hence, Gronwall's lemma implies that 
\[
\sup_{t\in \R}\lVert x(t)-y(t)\rVert \le  \delta \bigg{(}1+\frac{e^{N+c}}{L}\bigg{)} e^{N+c}=\epsilon.
\]
\end{proof}

One can now easily formulate and prove  continuous time versions of all other results we established in Section~\ref{MR}. We refrain from doing this since it represents a  very simple exercise and requires only simple modification of the arguments we developed.

\section{Applications}

\subsection{Hyers-Ulam stability}\label{sec: hyers-ulam stab}
It turns out that our results are closely related to the so-called  Hyers-Ulam stability and in fact, can be used to obtain new results related to this concept. We will not attempt to survey various results in the literature regarding the Hyers-Ulam stability but will rather focus on the recent papers~\cite{ BBT2, BRST, BLR} and the results obtained there. 

It  seem that there  are  various flavours of the Hyers-Ulam stability studied  in the literature. However, the concept studied in~\cite{BBT2, BRST, BLR} precisely corresponds to our notion of shadowing. In a series of remarks, we will now show how our results extend and unify those established in the papers we mentioned.

\begin{remark}
In~\cite{BBT2}, the authors prove that if $X=\mathbb C^m$ and if $A$ is an hyperbolic  operator on $X$(i.e  its spectrum doesn't intesect the unit circle), then~\eqref{ld2} with $A_n=A$, $n\ge 0$ has the $l^\infty$-Lipschitz shadowing property.
This result is a particular case of our Corollary~\ref{957} since the constant sequence $(A_n)_{n\ge 0}$ admits an exponential dichotomy. 
\end{remark}

\begin{remark}
In~\cite{BRST}, the authors study the system~\eqref{ld2} when $(A_n)_{n\ge 0}$ is a $q$-periodic sequence of linear operators on $X=\mathbb C^m$. They prove that~\eqref{ld2} has the $l^\infty$-shadowing property if $\cA(q,0)=A_{q-1}\cdots A_0$ is hyperbolic. 
Since the hyperbolicity of $\cA(q,0)$ implies that $(A_n)_{n\ge 0}$ admits an exponential dichotomy, this result is also a particular case of our Corollary~\ref{957}.
\end{remark}
\begin{remark}
Consider two sequences $(a_n)_{n\ge 0}$ and $(b_n)_{n\ge 0}$ in $\mathbb C$ and the associated  linear recurrence
\begin{equation}\label{lr}
x_{n+2}=a_nx_{n+1}+b_n x_n, \quad n\ge 0.
\end{equation}
Set 
\[
A_n=\begin{pmatrix}
0 & 1 \\
b_n & a_n
\end{pmatrix} \quad \text{for  $n\ge 0$,}
\]
and consider the associated linear system in $\mathbb C^2$ given by
\begin{equation}\label{ld3}
y_{n+1}=A_n y_n, \quad n\ge 0.
\end{equation}
Observe that if $(x_n)_{n\ge 0}\subset \mathbb C$ is a solution of~\eqref{lr} then  $(y_n)_{n\ge 0}$ given by $y_n=\begin{pmatrix} x_n \\ x_{n+1} \end{pmatrix}$ is a solution of~\eqref{ld3}. 
Conversely, if $(y_n)_{n\ge 0}$, $y_n=\begin{pmatrix} y_n^1\\ y_n^2\end{pmatrix}$ is a solution of~\eqref{ld3}, then $(x_n)_{n\ge 0}$ given by $x_n=y_n^1$ is a solution of~\eqref{lr} and 
$y_n^2=y_{n+1}^1$ for each $n\ge 0$.

Assume that the sequence $(A_m)_{m\ge 0}$ admits an exponential dichotomy and let us consider the norm $\lVert \cdot \rVert$ on $\mathbb C^2$ given by
\[
\lVert (z_1, z_2)\rVert:=\max \{ \lvert z_1\rvert, \lvert z_2\rvert \}. 
\]
Take $\varepsilon >0$ and let us consider $\delta >0$ that corresponds to $l^\infty$-Lipschitz shadowing of~\eqref{ld3}. We now take a sequence $(w_n)_{n\ge 0} \subset \mathbb C$ such that 
\[
\sup_{n\ge 0}\lvert w_{n+2}-a_nw_{n+1}-b_n w_n\rvert \le \delta. 
\]
Set $z_n=\begin{pmatrix} w_n \\ w_{n+1} \end{pmatrix}$, $n\ge 0$. Then, $(z_n)_{n\ge 0}$ is an $(\delta, l^\infty)$-pseudotrajectory. Hence, Corollary~\ref{957} implies that  there exists $(y_n)_{n\ge 0}$ solution of~\eqref{ld3} such that
$\sup_{n\ge 0}\lVert y_n-z_n\rVert \le \epsilon$. Hence, $(x_n)_{n\ge 0}$  given by $x_n=y_n^1$ is a solution of~\eqref{lr} and 
$\sup_{n\ge 0}\lvert x_n-w_n\rvert \le \sup_{n\ge 0}\lVert x_n-z_n\rVert\le \epsilon$. We conclude that~\eqref{lr} also has an $l^\infty$-Lipschitz shadowing property.
Consequently, since we have not assumed that the sequences $(a_n)_{n\ge 0}, (b_n)_{n\ge 0}$ are periodic, this gives a partial generalization of \cite[Theorem 2.3]{BLR}. 
\end{remark}

We hope that the results and the ideas developed in the present paper could be of use to establish additional results related to Hyers-Ulam stability.

\subsection{Grobman-Hartman's theorem} As an other application of our results we obtain a new proof of the nonautonomous version of the classical Grobman-Hartman theorem~\cite{Hart60}. More precisely, we revisit~\cite[Section 4.1]{BD19} to apply our new results in order to show that our ideas can be used to obtain a less restrictive version of~\cite[Theorem 4.1]{BD19}.

Let $(A_m)_{m\in \Z}$ be a sequence of bounded linear operators on $X$ as in Subsection~\ref{sec: setup}. Furthermore, suppose that each $A_m$ is invertible and that $\sup_{m\in \Z}\lVert A_m^{-1}\rVert <\infty$. Associated to these parameters by Theorem \ref{theo: dichotomy uniqueness} (applied to $B=l^\infty$ and $f_n\equiv 0$), consider $\varepsilon>0$ sufficiently small and $\delta=L\varepsilon>0$. Let $(g_n)_{n\in \Z}$ be a sequence of maps $g_n\colon X\to X$ satisfying \eqref{fg} with $c$ sufficiently small and such that
\[\lVert g_n\rVert_{\sup} \le \delta \quad  \text{for each $n\in \Z$.} \] 
We consider a  difference equation
\begin{equation}\label{eq: rec GH}
y_{n+1}=G_n(y_n) \quad n\in \Z,
\end{equation}
where $G_n:=A_n+g_n$. By decreasing $c$ (if necessary), we have that $G_n$ is a homeomorphism for each $n\in \Z$ (see~\cite{BD19}). Then, we have the following result. 

\begin{theorem}\label{theo: GH}
There exists a unique sequence $h_m \colon X \to X$, $m\in \Z$, of homeomorphisms
such that  for each $m\in \Z$,
\begin{equation}\label{eq:gh1}
h_{m+1}\circ G_m=A_m \circ h_m
\end{equation}
and
\begin{equation}\label{eq:gh2}
\lVert h_m-\Id\rVert_{\sup}=\sup_{x\in X} \lVert h_m(x)-x\rVert \le \epsilon. 
\end{equation}
\end{theorem}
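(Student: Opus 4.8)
The plan is to prove the existence part by constructing each $h_m$ as $\Id$ plus a bounded correction term and then solving a fixed point equation in the space of bounded sequences of maps. First I would set up the functional analytic framework: let $\mathcal{B}$ denote the Banach space of sequences $(\phi_m)_{m\in\Z}$ of bounded continuous maps $\phi_m\colon X\to X$ with $\sup_{m\in\Z}\lVert\phi_m\rVert_{\sup}<\infty$, equipped with the supremum-of-supremums norm. Writing $h_m=\Id+\phi_m$, equation~\eqref{eq:gh1} becomes, after expanding $h_{m+1}\circ G_m=(\Id+\phi_{m+1})\circ(A_m+g_m)$ and $A_m\circ h_m=A_m+A_m\phi_m$, a relation of the form
\[
\phi_{m+1}\circ G_m-A_m\phi_m=A_m\cdot(\text{something involving }g_m)-g_m,
\]
more precisely $\phi_{m+1}(G_m(x))-A_m\phi_m(x)=-g_m(x)$ after simplification (one reads off the linear and nonlinear parts carefully). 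Since each $G_m$ is a homeomorphism, this is equivalent to $\phi_{m+1}(y)-A_m\phi_m(G_m^{-1}(y))=-g_m(G_m^{-1}(y))$, which for fixed $y$ we can view pointwise as the nonautonomous linear difference equation~\eqref{ae} with a bounded right-hand side; by Theorem~\ref{adm2}, since $(A_m)_{m\in\Z}$ admits an exponential dichotomy, there is a unique bounded solution given by the Green operator $G$. The key point is that this defines a fixed point map $\mathcal{T}$ on $\mathcal{B}$ (or on the ball of radius $\epsilon$) whose fixed points are exactly the solutions of~\eqref{eq:gh1} with bounded correction, and the bound $\lVert g_m\rVert_{\sup}\le\delta=L\epsilon$ together with $\lVert G\rVert\cdot(\text{stuff})$ controls $\lVert\mathcal T(\mathbf 0)\rVert$.

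The key steps in order would be: (1) rewrite the conjugacy equation in the $h_m=\Id+\phi_m$ coordinates and identify, for each fixed argument, the nonautonomous linear equation to which Theorem~\ref{adm2} applies, using the uniform bound $\sup_m\lVert A_m^{-1}\rVert<\infty$ to control the substitution by $G_m^{-1}$; (2) define the operator $\mathcal T$ on the closed ball $D(\mathbf 0,\epsilon)$ in $\mathcal{B}$ by $\mathcal T(\phi)_m=$ the unique bounded solution of the associated linear equation with right-hand side built from $g$ and $\phi$; (3) show $\mathcal T$ maps $D(\mathbf 0,\epsilon)$ into itself — this is where $\delta=L\epsilon$ and the estimate~\eqref{7:55} on $\lVert G\rVert$ enter, exactly as in the proof of Theorem~\ref{cor: uniqueness}; (4) show $\mathcal T$ is a contraction, using the Lipschitz bound~\eqref{fg} with $c$ small (so that $c\lVert G\rVert<1$); (5) obtain the fixed point $\phi$ and hence $h_m=\Id+\phi_m$ satisfying~\eqref{eq:gh1} and~\eqref{eq:gh2}. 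Then I would invoke symmetry: the same argument applied to the pair $(A_m)$ conjugated \emph{to} $(G_m)$ produces bounded maps $h_m'$ with $h_{m+1}'\circ A_m=G_m\circ h_m'$; composing $h_m\circ h_m'$ and $h_m'\circ h_m$ gives sequences of bounded-perturbation maps conjugating $A_m$ to itself and $G_m$ to itself respectively, and a uniqueness statement for such self-conjugacies (again via the same contraction, whose only fixed point near $\Id$ is $\Id$) forces $h_m\circ h_m'=h_m'\circ h_m=\Id$, so each $h_m$ is a homeomorphism. Uniqueness of the sequence $(h_m)$ with the stated properties follows from the uniqueness of the fixed point of $\mathcal T$.

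The step I expect to be the main obstacle is handling the substitution $x\mapsto G_m^{-1}(x)$ cleanly, i.e. verifying that after this change of variable the right-hand side is still a \emph{bounded continuous} map and that the resulting operator $\mathcal T$ genuinely maps $\mathcal{B}$ into itself with the correct estimates; this is why the hypotheses that $A_m$ is invertible with $\sup_m\lVert A_m^{-1}\rVert<\infty$ and that $c$ is small (ensuring $G_m$ is a bi-Lipschitz homeomorphism with uniformly controlled inverse) are essential. A secondary subtlety is that continuity (not just boundedness) of the fixed point $\phi_m$ must be preserved under $\mathcal T$, so one should work in the closed subspace of bounded \emph{continuous} sequences; since $G$ is linear and bounded and composition with continuous maps preserves continuity, this causes no real difficulty but should be noted. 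The contraction and self-mapping estimates themselves are essentially identical to those already carried out in the proof of Theorem~\ref{cor: uniqueness}, so I would keep that part brief and refer back to it.
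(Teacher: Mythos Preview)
Your approach is correct and, once unpacked, essentially equivalent to the paper's, but the paper takes a more direct and transparent route. The paper does not set up a fixed-point problem in a space of map-valued sequences at all; instead it observes that for each fixed $y\in X$ the \emph{nonlinear} orbit $y_n=\mathcal G(n,m)y$ is a $(\delta,l^\infty)$-pseudotrajectory of the \emph{linear} system $x_{n+1}=A_nx_n$ (since $\lVert y_{n+1}-A_ny_n\rVert=\lVert g_n(y_n)\rVert\le\delta$), applies Theorem~\ref{theo: dichotomy uniqueness} with $f_n\equiv 0$ to obtain the unique shadowing orbit $(x_n)$, and simply \emph{defines} $h_m(y):=x_m$. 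The relations~\eqref{eq:gh1} and~\eqref{eq:gh2} and the uniqueness then follow immediately from the uniqueness of the shadowing orbit; the homeomorphism property is left to the reader (via the symmetric construction you describe). In other words, the paper uses the shadowing theorem as a black box, whereas you rederive its content in the conjugacy setting.

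Two remarks on your write-up. First, in the forward direction your equation $\phi_{m+1}(G_m(x))-A_m\phi_m(x)=-g_m(x)$ is \emph{linear} in $\phi$: along each nonlinear orbit $(y_n)$ it reads $u_{n+1}-A_nu_n=-g_n(y_n)$ with $u_n=\phi_n(y_n)$, so Theorem~\ref{adm2} solves it directly and your steps (3)--(4) (the contraction using $c\lVert G\rVert<1$) are unnecessary here --- the map $\mathcal T$ is constant in $\phi$. The contraction \emph{is} genuinely needed for the inverse conjugacy $h'_{m+1}\circ A_m=G_m\circ h'_m$, where the analogous equation has right-hand side $g_m(x+\psi_m(x))$; so your outline is right in spirit but the Lipschitz smallness of $g$ enters only in the construction of $h^{-1}$, not of $h$. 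Second, your ``main obstacle'' (handling the substitution by $G_m^{-1}$) dissolves once you adopt the paper's orbit-wise viewpoint: you never need to invert $G_m$ globally, you just follow one nonlinear trajectory at a time and apply shadowing to it.
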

The family of homeomorphism $h_m \colon X \to X$, $m\in \Z$, satisfying \eqref{eq:gh1} and \eqref{eq:gh2} is constructed ``explicitly" using the $l^\infty$-Lipschitz shadowing property. In fact, fix  $m\in \Z$. Given $y\in X$, let us consider the sequence $\mathbf y=(y_n)_{n\in \Z}$ given by
$y_n=\mathcal{G}(n, m)y$ for $n\in \Z$ where
\[
\mathcal{G}(m,n)=
\begin{cases}
G_{m-1}\circ \ldots \circ G_n & \text{if $m>n$,}\\
\Id & \text{if $m=n$,}\\
G_m^{-1}\circ \ldots \circ G_{n-1}^{-1} & \text{if $m<n$.}
\end{cases}
\]
Then, $\mathbf y$ is a solution of~\eqref{eq: rec GH}. Moreover,
\begin{displaymath}
\sup_{n\in \Z}\lVert y_{n+1}-A_n y_n\rVert=\sup_{n\in \Z} \lVert g_n(y_n)\rVert \le
\delta.
\end{displaymath}
In particular, $\mathbf y=(y_n)_{n\in \Z}$ is a $(\delta, l^\infty)$-pseudotrajectory for \eqref{ld}. Hence, it follows from Theorem \ref{theo: dichotomy uniqueness} applied to the case when $B=l^\infty$ and $f_n\equiv 0$ that there exists a unique sequence $\mathbf x=(x_n)_{n\in \Z}$ such that $x_{n+1}=A_nx_n$ for $n\in \Z$ and $\sup_{n\in \Z}\lVert x_n-y_n \rVert \le
\varepsilon$. Set
\[
 h_m(y)=h_m(y_m):=x_m. 
\]
It is easy to verify that~\eqref{eq:gh1} holds. Moreover, 
\[
\lVert h_m(y)-y\rVert=\lVert x_m-y_m\rVert \le \epsilon, 
\]
proving ~\eqref{eq:gh2}. It remains to show that each $h_m$ is a homeomorphism. The proof of this fact is similar to the proof of Theorem 4.1 of \cite{BD19} and thus is left as an exercise. We also refer to Remark 4.3 of \cite{BD19} for references to related results. The difference from the aforementioned result and our Theorem \ref{theo: GH} is that this last result works under less restrictive assumptions since the nonlinear perturbations allowed in Theorem \ref{theo: dichotomy uniqueness} are much more general than the ones in \cite{BD19}. Indeed, the assumptions in Theorem~\ref{theo: GH} coincide with those in~\cite{Palmer} where the first nonautonomous version of the Grobman-Hartman theorem was obtained (although Palmer studied dynamics with continuous time).


\medskip{\bf Acknowledgements.}
We would like to thank the anonymous referee for his/hers constructive comments  that helped us to improve the quality of the presentation. 
 L.B. was partially supported by a CNPq-Brazil PQ fellowship under Grant No. 306484/2018-8. D.D. was supported in part by Croatian
Science Foundation under the project IP-2019-04-1239 and by the University of
Rijeka under the projects uniri-prirod-18-9 and uniri-prprirod-19-16.

\end{document}